\numberwithin{equation}{section} \theoremstyle{plain}
\newtheorem{thm}{Theorem}[section]
\newtheorem{prop}[thm]{Proposition}
\newtheorem{lem}[thm]{Lemma}
\newtheorem{cor}[thm]{Corollary}
\newtheorem{defn}[thm]{Definition}
\newtheorem{exmp}[thm]{Example}
\newtheorem{rem}[thm]{Remark}
\newtheorem{ack}{Acknowledgements}
\def\<{\langle}
\def\>{\rangle}
\def\({\left(}
\def\){\right)}
\def\[{\left[}
\def\]{\right]}
\title{H\MakeLowercase{olomorphic d-scalar curvature on almost} H\MakeLowercase{ermitian manifolds}}
\author[J.Q. Ge]{Jianquan Ge}
\address{School of Mathematical Sciences, Laboratory of Mathematics and Complex Systems, Beijing Normal University, Beijing 100875, P.R. CHINA.}
\email{jqge@bnu.edu.cn}
\author[Y. Zhou]{Yi Zhou$^{*}$}
\email{zhou\_yi@mail.bnu.edu.cn}
\subjclass[2010]{53C15, 53C21, 53C55, 58E11.}
\date{}
\keywords{holomorphic d-scalar curvature; almost Hermitian manifolds; Yamabe problem; prescribing curvature problem.}
\thanks {$^{*}$ Corresponding author.}
\begin{document}
\maketitle
\begin{abstract}
In this paper, we study the existence of constant holomorphic d-scalar curvature and
the prescribing holomorphic d-scalar curvature problem on closed, connected almost Hermitian manifolds
of dimension $n\geq6$.
In addition, we obtain an application and a variational formula for the associated conformal invariant.
\end{abstract}

\section{Introduction}
Let $(M^n, g, J)$ be an almost Hermitian manifold of dimension $n$,
and let $\nabla$ be the Levi-Civita connection of $(M^n, g)$.
The Laplace operator is defined by $\Delta_g:=-\operatorname{tr}_g\nabla^2$.
The curvature of $(M^n, g)$ is defined by
$$R(X, Y)Z:=\nabla_{[X, Y]}Z-[\nabla_X, \nabla_Y]Z,$$
and the Riemannian curvature tensor is $R(X, Y, Z, W):=g(R(X, Y)Z, W)$.
Then the Ricci tensor is
$$\operatorname{Ric}(X, Y):=\operatorname{tr}_g\left\{Z\mapsto R(X, Z)Y\right\},$$
and the $\verb"*"$-Ricci tensor \cite{{Tachibana, Tang}} is
$$\operatorname{Ric}^*(X, Y):=\operatorname{tr}_g\left\{Z\mapsto -J(R(X, Z)(JY))\right\}.$$
In addition,
$R_g:=\operatorname{tr}_g\operatorname{Ric}$ and $R_g^*:=\operatorname{tr}_g\operatorname{Ric}^*$
are called the scalar curvature and the $\verb"*"$-scalar curvature, respectively.
We call the difference $$S_J:=R_g-R_g^*$$ the \emph{holomorphic d-scalar curvature} of $(M^n, g, J)$.
Hern\'{a}ndez-Lamoneda \cite{Hern} pointed out that for any smooth local orthonormal frame $\(Z_i\)_{i=1}^{n/2}$ of the holomorphic tangent bundle $T^{1, 0}M$,
$$S_J=4\sum_{i,j=1}^{n/2}(\mathcal{R}^{\mathbb{C}}(Z_i\wedge Z_j), \overline{Z_i\wedge Z_j}),$$
where
$\mathcal{R}^{\mathbb{C}}$ denotes the complex linear extension of the curvature operator $\mathcal{R}$
and $(\cdot, \cdot)$ is the complex linear extension of the inner product induced by $g$.
In \cite{Hern}, $S_J$ is named the holomorphic scalar curvature,
but we wish to avoid mistaking it for the trace of holomorphic bisectional curvature.

\begin{exmp}\label{ex-Kaehler}
For every K\"{a}hler manifold, we have $R_g=R_g^*$ and hence $S_J=0$.
In particular, every $2$-dimensional almost Hermitian manifold has $S_J=0$.
\end{exmp}

\begin{exmp}
The unit sphere $(\mathbb{S}^6, \mathring{g})$ admits an almost Hermitian structure
induced by the Cayley numbers.
In this case, we have $$R_{\mathring{g}}=30,\quad R_{\mathring{g}}^*=6\quad \mbox{and}\quad S_J=24.$$
\end{exmp}

The smooth $2$-form
$$\omega(X, Y):=g(JX, Y)$$
is called the fundamental form of $(M^n, g, J)$.
The smooth tensor field
$$N_J(X, Y):=[JX, JY]-[X, Y]-J[JX, Y]-J[X, JY]$$
is called the Nijenhuis tensor of $J$.
It is well known that $J$ is integrable if and only if $N_J=0$.
For each $x\in M$, let $$W:=\left\{\alpha\in T_x^*M\otimes T_x^*M\otimes T_x^*M:
\alpha(X, Y, Z)=-\alpha(X, Z, Y)=-\alpha(X, JY, JZ)\right\}.$$
By decomposing $$\nabla\omega\big|_x\in W=W_1\oplus W_2\oplus W_3\oplus W_4$$ into $4$ components according to symmetries,
Gray and Hervella \cite{Gray-Hervella} classified all almost Hermitian manifolds into $2^4=16$ classes
which are widely used in the study of almost complex geometry.
In the following table, we only list some of the classes that will be concerned in this paper.

\begin{table}[htb]
\caption{Partial classes of Gray-Hervella classification}
\centering
\scalebox{0.9}{
\begin{tabular}{|c|c|c|}
\hline
Class & Name & Conditions\\
\hline
$\{0\}$ & K\"{a}hler & $\nabla\omega=0$\\
\hline
$\mathcal{W}_1$ & Nearly K\"{a}hler & $\nabla_X\omega(X, Y)=0$\\
\hline
$\mathcal{W}_2$ & Almost K\"{a}hler & $d\omega=0$\\
\hline
$\mathcal{W}_3$ & Balanced & $\delta\omega=0=N_J$\\
\hline
$\mathcal{W}_4$ & No name & $\nabla_X\omega(Y, Z)=
\frac{-1}{n-2}\big[g(X, Y)\delta\omega(Z)-g(X, Z)\delta\omega(Y)$\\
\quad & \quad & \quad\quad\quad\quad\quad\quad $-g(X, JY)\delta\omega(JZ)+g(X, JZ)\delta\omega(JY)\big]$\\
\hline
$\mathcal{W}_2\oplus\mathcal{W}_3$ & No name &
$\delta\omega=0=\mathfrak{C}_{XYZ}\left\{\nabla_Z\omega(X, Y)-\nabla_{JZ}\omega(JX, Y)\right\}$\\
\hline
$\mathcal{W}_3\oplus\mathcal{W}_4$ & Hermitian & $N_J=0$\\
\hline
$\mathcal{W}_1\oplus\mathcal{W}_3\oplus\mathcal{W}_4$ & $\mathcal{G}_1$ &
$\nabla_X\omega(X, Y)-\nabla_{JX}\omega(JX, Y)=0$\\
\hline
$\mathcal{W}$ & Almost Hermitian & No condition\\
\hline
\end{tabular}}
\end{table}

Here, $\mathcal{W}_i$ denotes the corresponding class of $W_i$, and
$\mathcal{W}_i\oplus\mathcal{W}_j$ denotes the corresponding class of $W_i\oplus W_j$, etc.
In addition, $\delta$ denotes the codifferential, and $\mathfrak{C}_{XYZ}$ denotes the cyclic sum for smooth vector fields $X, Y, Z\in\mathscr{X}(M)$.

In this paper, we introduce a Yamabe-type conformal invariant $Y(M^n,g,J)$ to study the existence of constant holomorphic d-scalar curvature and
the prescribing holomorphic d-scalar curvature problem on closed, connected almost Hermitian manifolds. It turns out that the answers are completely similar to that in original Yamabe problem except possibly for dimension $n=4$ (see Theorem \ref{thm chsc} and \ref{thm phsc} in Sections \ref{sec-chsc} and \ref{sec-phsc}, respectively). In Section \ref{sec-balanced}, we obtain some rigidity results for the conformal invariant $Y(M^n,g,J)$ with respect to the Gray-Hervella classification of almost Hermitian manifolds (see Theorem \ref{thm-balanced}). In particular, we show that the almost complex structure $J$ is not integrable if $Y(M^n,g,J)<0$ for $n\geq6$. We finally give a variational formula of $Y(M^n,g,J)$ under deformations of compatible almost complex structures $J$ (see Theorem \ref{dY}). The critical point $J$ is characterized by the symmetry of the $\verb"*"$-Ricci curvature of $(M^n, g, J)$ (see Theorem \ref{cp}).

\section{Conformal deformation and the Yamabe problem}\label{sec-chsc}
\subsection{Conformal deformation}\

Let $(M^n, g)$ be a closed, connected Riemannian manifold of dimension $n>2$.
Consider the conformal metric $\widetilde{g}=u^{p-2}g$ on $M$,
where $u$ is a smooth positive function and $p=\frac{2n}{n-2}$.
If $R_g$ and $R_{\widetilde{g}}$ denote the scalar curvature of $(M^n, g)$ and $(M^n, \widetilde{g})$, respectively, then we have
\begin{equation}\label{scalar eq}
4\frac{n-1}{n-2}\Delta_gu +R_gu=R_{\widetilde{g}}u^{p-1}.
\end{equation}
Thus $\widetilde{g}$ has constant scalar curvature $c$ if and only if
$u$ satisfies the well-known Yamabe equation:
\begin{equation}\label{Yamabe eq}
4\frac{n-1}{n-2}\Delta_gu +R_gu=cu^{p-1}.
\end{equation}
For every positive function $u\in C^{\infty}(M)$,
$$Q_g(u):
=\frac{\displaystyle{\int_M}R_{\widetilde{g}}\ dV_{\widetilde{g}}}
{\(\displaystyle{\int_M} dV_{\widetilde{g}}\)^{2/p}}
=\frac{\displaystyle{\int_M} 4\frac{n-1}{n-2}|du|^2+R_gu^2\ dV_g}{\|u\|^2_p}$$
is called the Yamabe functional, where $$\|u\|_p:=\(\int_M |u|^p\ dV_g\)^{\frac1p}.$$
A positive function $u$ is a critical point of $Q_g$
if and only if $u$ satisfies (\ref{Yamabe eq}) with $c=Q_g(u)/\|u\|^{p-2}_p$.
The so-called Yamabe constant
$$Y(M, g):=\inf\left\{Q_g(u): u\in C^{\infty}(M), u>0\right\}$$
is a conformal invariant.

\begin{rem}$($See \cite[p49]{Lee-Parker}$)$.
Note that $Q_g$ is continuous on the Sobolev space $H^1(M)$ and $Q_g(u)=Q_g(|u|)$.
Since $C^{\infty}(M)$ is dense in $H^1(M)$ and
a nonnegative function can be approximated in $H^1(M)$ by positive functions,
we actually have $$Y(M, g)=\inf_{u\in H^1(M)\setminus\{0\}}Q_g(u).$$
\end{rem}

The study of the Yamabe equation yielded the famous result.
\begin{thm}$($See \cite{Aubin76, Trudinger, Yamabe, Schoen}$)$.
Let $(M^n, g)$ be a closed, connected Riemannian manifold.
Then there exists a conformal metric $\widetilde{g}$ with constant scalar curvature
$R_{\widetilde{g}}=Y(M, g)$.
\end{thm}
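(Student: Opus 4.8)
The plan is to realize $Y(M,g)$ as a minimum of the Yamabe functional $Q_g$ and to show that the minimizer solves the Yamabe equation (\ref{Yamabe eq}) with $c=Y(M,g)$, thereby producing the desired conformal metric. Indeed, a positive minimizer $u$ yields $\widetilde{g}=u^{p-2}g$ with $R_{\widetilde{g}}=c$ constant, and the normalization $\|u\|_p=1$ forces $c=Y(M,g)$. The central difficulty is that the exponent $p=\frac{2n}{n-2}$ is precisely the critical Sobolev exponent for the embedding $H^1(M)\hookrightarrow L^p(M)$, which is continuous but \emph{not} compact. Consequently a minimizing sequence for $Q_g$ need not converge strongly in $L^p$, and the direct method of the calculus of variations fails outright.

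To circumvent this I would first follow Yamabe and Trudinger and regularize by lowering the exponent. For $2\le q<p$ the embedding $H^1(M)\hookrightarrow L^q(M)$ is compact, so the subcritical functional
$$Q_g^q(u):=\frac{\int_M 4\frac{n-1}{n-2}|du|^2+R_gu^2\ dV_g}{\(\int_M|u|^q\ dV_g\)^{2/q}}$$
attains its infimum $Y_q$. Replacing a minimizer by its absolute value and invoking the strong maximum principle together with elliptic regularity and a bootstrap argument, one obtains a smooth positive $u_q$ with $\|u_q\|_q=1$ solving the subcritical equation
$$4\frac{n-1}{n-2}\Delta_gu_q+R_gu_q=Y_qu_q^{q-1}.$$
It then remains to let $q\to p$. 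This step requires a uniform $L^\infty$ (hence $C^{2,\alpha}$) bound on the family $u_q$, which would furnish a smooth positive limit $u$ solving (\ref{Yamabe eq}) with $c=Y(M,g)$.

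The crucial point, and the heart of the theorem, is that such uniform estimates are available precisely when the strict inequality
$$Y(M,g)<Y(\mathbb{S}^n,\mathring{g})$$
holds, where $(\mathbb{S}^n,\mathring{g})$ is the round sphere. Aubin's concentration-compactness argument shows that under this strict inequality the family $u_q$ cannot concentrate at a point, so no mass escapes in the limit and the limit is a genuine positive minimizer. Since one always has $Y(M,g)\le Y(\mathbb{S}^n,\mathring{g})$ — obtained by transplanting the extremal sphere ``bubbles'' into a small geodesic ball — the remaining task is to exclude equality.

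I expect the verification of this strict inequality to be the main obstacle, and it splits into two regimes. For $n\ge6$ with $(M^n,g)$ not locally conformally flat, I would follow Aubin and test $Q_g$ against a truncated, rescaled sphere bubble centered where the Weyl tensor does not vanish; expanding $Q_g$ in the concentration parameter, the leading correction is governed by $|W|^2$ and is strictly negative, giving $Y(M,g)<Y(\mathbb{S}^n,\mathring{g})$. The remaining cases — dimensions $n=3,4,5$ and all locally conformally flat manifolds — resist this local expansion, and here I would invoke Schoen's resolution: use the Green's function of the conformal Laplacian $4\frac{n-1}{n-2}\Delta_g+R_g$ as a test function, whose expansion near its pole involves the ADM mass of an associated asymptotically flat metric, and appeal to the positive mass theorem to pin down the sign of the relevant constant term and again force the strict inequality. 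Combining the two regimes covers every closed, connected manifold, and the minimizer produced above provides the conformal metric of constant scalar curvature $Y(M,g)$.
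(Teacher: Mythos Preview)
The paper does not supply its own proof of this theorem; it is quoted as a classical result with references to \cite{Aubin76, Trudinger, Yamabe, Schoen} (and the survey \cite{Lee-Parker}), so there is no in-paper argument to compare against. Your sketch is an accurate outline of the proof contained in those references: subcritical regularization for existence of minimizers, uniform estimates and passage to the limit under the strict inequality $Y(M,g)<Y(\mathbb{S}^n,\mathring{g})$, Aubin's Weyl-tensor test function for $n\ge6$ not locally conformally flat, and Schoen's Green's-function/positive-mass argument for the remaining cases.

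One small imprecision: you say the task is to ``exclude equality,'' but equality $Y(M,g)=Y(\mathbb{S}^n,\mathring{g})$ is not excluded---it characterizes the conformal class of the round sphere. In that case the theorem holds trivially, since the round metric (pulled back by the conformal equivalence and rescaled to unit volume) already has constant scalar curvature equal to $Y(M,g)$. So the dichotomy is: either the inequality is strict and the minimizer exists by the compactness argument, or equality holds and the conclusion is immediate.
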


From now on, we suppose $(M^n, g, J)$ is a closed, connected almost Hermitian manifold,
$n\geq 6$.
It is clear that $(M^n, \widetilde{g}, J)$ is still an almost Hermitian manifold.
Suppose $R_{g}^*$ and $R_{\widetilde{g}}^*$ denote the $\verb"*"$-scalar curvature of
$(M^n, g, J)$ and $(M^n, \widetilde{g}, J)$, respectively,
del Rio and Simanca \cite{Rio-Simanca} showed that
\begin{equation}\label{*scalar eq}
\frac4{n-2}\Delta_gu +R_{g}^*u=R_{\widetilde{g}}^*u^{p-1}.
\end{equation}
They also defined a conformal invariant, and proved the following result:
\begin{thm}$($See \cite{Rio-Simanca}$)$.
Let $(M^n, g, J)$ be a closed, connected almost Hermitian manifold, $n\geq 6$.
Then there exists a conformal metric $\widetilde{g}$ such that
$(M^n, \widetilde{g}, J)$ has constant $\verb"*"$-scalar curvature $R_{\widetilde{g}}^*$.
\end{thm}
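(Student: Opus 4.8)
The plan is to mimic the variational resolution of the Yamabe problem, with the conformal Laplacian replaced by the (non conformally covariant) operator $L_g^* := \frac{4}{n-2}\Delta_g + R_g^*$ whose conformal behaviour is recorded in (\ref{*scalar eq}). First I would introduce the \emph{$*$-Yamabe functional}
\[ Q_g^*(u) := \frac{\displaystyle\int_M \frac{4}{n-2}|du|^2 + R_g^*\,u^2 \ dV_g}{\|u\|_p^2}, \qquad u\in C^\infty(M),\ u>0, \]
and set $\mu^* := \inf\{Q_g^*(u): u>0\}$. Multiplying (\ref{*scalar eq}) by $u$ and integrating shows that $Q_g^*(u) = \int_M R_{\widetilde g}^*\ dV_{\widetilde g}\big/\(\int_M dV_{\widetilde g}\)^{2/p}$ for $\widetilde g = u^{p-2}g$, so $\mu^*$ is a conformal invariant, and the usual computation of the Euler--Lagrange equation shows that every positive critical point $u$ of $Q_g^*$ solves (\ref{*scalar eq}) with $R_{\widetilde g}^*$ equal to the constant $\mu^*\|u\|_p^{2-p}$. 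Since $M$ is closed and $R_g^*$ is bounded, Hölder's inequality gives $\int_M R_g^* u^2\geq -C\|u\|_p^2$, so $Q_g^*$ is bounded below and $\mu^*$ is finite. Thus the theorem reduces to showing that $\mu^*$ is attained by a positive function.

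Next I would run the subcritical approximation exactly as in the Yamabe case. For $2<s<p$ the embedding $H^1(M)\hookrightarrow L^s(M)$ is compact, so the numerator of $Q_g^*$ restricted to $\{\|u\|_s=1\}$ attains its infimum $\mu_s$ at some $u_s$; elliptic regularity together with the maximum principle make $u_s$ smooth and positive, and it solves the subcritical analogue of (\ref{*scalar eq}), while one checks that $\mu_s\to\mu^*$ as $s\uparrow p$. The loss of compactness as $s\to p$ is governed by the sharp Euclidean Sobolev constant $S_n := \inf\{\|\nabla v\|^2_{L^2(\mathbb{R}^n)}/\|v\|^2_{L^p(\mathbb{R}^n)}\}$, attained by the Aubin--Talenti bubbles. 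A standard concentration argument then shows that if
\[ \mu^* < \frac{4}{n-2}\,S_n, \]
the family $\{u_s\}$ stays bounded in $H^1(M)$ and cannot concentrate at a point, so a subsequence converges strongly in $L^p$ to a minimizer $u>0$ of $Q_g^*$. A final elliptic bootstrap on (\ref{*scalar eq}) gives $u\in C^\infty(M)$, and $\widetilde g = u^{p-2}g$ is the desired metric of constant $*$-scalar curvature.

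The main obstacle will therefore be the strict inequality $\mu^* < \frac{4}{n-2}S_n$, which is exactly what excludes bubbling (testing $Q_g^*$ against bubbles already shows $\mu^*\leq \frac{4}{n-2}S_n$ always, so only strictness is at stake). I would test $Q_g^*$ against truncated Aubin--Talenti bubbles $u_\varepsilon$ concentrating at a point $x_0$, written in conformal normal coordinates for $g$ at $x_0$, so that the volume density and the term $\frac{4}{n-2}|du_\varepsilon|^2$ contribute no $O(\varepsilon^2)$ correction (their $\varepsilon^2$ coefficients are proportional to $R_g(x_0)$, which vanishes in such coordinates). For $n\geq5$ the potential term then produces the expansion
\[ Q_g^*(u_\varepsilon) = \frac{4}{n-2}\,S_n + c_n\,R_g^*(x_0)\,\varepsilon^2 + o(\varepsilon^2), \qquad c_n>0, \]
so whenever $R_g^*$ is negative somewhere the strict inequality follows by concentrating at such a point.

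The genuinely delicate case is $R_g^*\geq0$: there the $O(\varepsilon^2)$ term no longer helps, concentrated bubbles only give $Q_g^*(u_\varepsilon)\geq\frac{4}{n-2}S_n$, and the strict inequality—if it holds—must be witnessed by spread-out test functions rather than by concentration, while the borderline $\mu^*=\frac{4}{n-2}S_n$ may have to be treated on its own (as the round model does in the classical problem). Establishing strictness here is the heart of the matter, and I expect it to parallel and borrow from the Aubin--Schoen resolution of the hardest Yamabe cases, where the finer local expansion governed by the Weyl tensor enters and the hypothesis $n\geq6$ is used. Once this strict-inequality step is secured, the compactness, regularity, and positivity arguments above are routine adaptations of the classical Yamabe theory and complete the proof.
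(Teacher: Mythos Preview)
The paper does not supply its own proof of this theorem; it is quoted from del Rio--Simanca. However, the paper reproduces the del Rio--Simanca method in full for the parallel $S_J$ problem (Propositions~\ref{prop c1}--\ref{prop c3} and the computation between them), so your proposal can be compared against that.

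Your variational set-up (the functional $Q_g^*$, finiteness of $\mu^*$, the subcritical approximation, and the threshold $\frac{4}{n-2}S_n$) is correct and is exactly what the cited argument uses. The genuine gap is in your treatment of the case $R_g^*\geq 0$. You suggest this should parallel the Aubin--Schoen resolution (higher-order Weyl expansion, or positive mass), but that is not how del Rio--Simanca proceed, and it is unclear either classical mechanism applies here: Aubin's $\varepsilon^4$ term uses $|W|^2$, not any $J$-dependent quantity, and Schoen's positive-mass step has no evident analogue for $R_g^*$.

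The actual device is specific to the almost-Hermitian structure. The key identity is formula~(\ref{lem R-S}), $(n-1)R_g^*-R_g=2(n-1)\widehat W(\omega^\#,\omega^\#)$, which expresses the mismatch via a pointwise conformal invariant. One trichotomizes on its sign. If $\widehat W(\omega^\#,\omega^\#)<0$ somewhere, Aubin's general criterion (the paper's Theorem~2.6) gives the strict inequality directly; if it vanishes identically, then $R_g^*=R_g/(n-1)$ and the problem is the classical Yamabe problem. The remaining case, $\widehat W(\omega^\#,\omega^\#)\geq 0$ with strict inequality at some $P$, is precisely your ``delicate'' case: in conformal normal coordinates one has $R_g^*(P)=2\widehat W(\omega^\#,\omega^\#)(P)>0$, so your bubble expansion gives the wrong sign at order $\varepsilon^2$ and cannot conclude. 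Del Rio--Simanca instead invoke the stronger conformal freedom of prescribing the \emph{full} Ricci tensor at $P$ (the paper's Theorem~\ref{thm cnc}, i.e.\ Lee--Parker's conformal normal coordinate theorem at order two): they set $\operatorname{Ric}(P)$ proportional to a specific tensor built from $W$ and $J$, introducing a real parameter $l$ so that $R_g(P)=l\,\widehat W(\omega^\#,\omega^\#)(P)$. The $\varepsilon^2$ coefficient in the bubble expansion then becomes linear in $l$, and an explicit choice of $l$ (for $S_J$ the paper takes $l=-\tfrac{3n(n-1)}{n-2}$; the $R_g^*$ case is analogous) forces the sign to be correct. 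The hypothesis $n\geq 6$ enters through the positivity of the resulting one-variable integral and the control of the error term $E_2$ in (\ref{E2}). Your conformal-normal-coordinate choice corresponds to $R_g(P)=0$, a single value of $l$, and does not provide enough freedom to cover the positive-$\widehat W$ regime.
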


\begin{rem}
In \cite{Rio-Simanca}, they claimed that the above theorem also holds for $n=4$,
but we find a gap in their estimate.
We think another approach would be required to prove this case as in the classical Yamabe problem for dimension less than $6$.
\end{rem}

As \cite[p199]{Rio-Simanca} said, it will be rare that an almost Hermitian manifold
has both constant scalar curvature and constant $\verb"*"$-scalar curvature.
Therefore, it is nontrivial to study the Yamabe problem
for holomorphic d-scalar curvature on almost Hermitian manifolds.
In the following, we discuss this problem.

Firstly, by (\ref{scalar eq}) and (\ref{*scalar eq}), we have
\begin{equation}\label{key eq}
4\Delta_gu +S_Ju=\widetilde{S_J}u^{p-1},
\end{equation}
where $S_J$ and $\widetilde{S_J}$ denote the holomorphic d-scalar curvature of
$(M^n, g, J)$ and $(M^n, \widetilde{g}, J)$, respectively.
Naturally, we consider the functional
\begin{equation}\label{QgJ}
Q_{g, J}(u):=\frac{\displaystyle{\int_M}\widetilde{S_J} dV_{\widetilde{g}}}
{\(\displaystyle{\int_M} dV_{\widetilde{g}}\)^{\frac2p}}
=\frac{\displaystyle{\int_M} 4|du|^2+S_Ju^2dV_g}{\|u\|^2_p}.
\end{equation}
Since for any $v\in C^{\infty}(M)$, $$\frac{d}{dt}\bigg|_{t=0}Q_{g, J}(u+tv)=
\frac2{\|u\|_p^2}\int_M(4\Delta_gu+S_Ju-Q_{g, J}(u)\|u\|^{2-p}_pu^{p-1})v\ dV_g,$$
a positive function $u$ is a critical point of $Q_{g, J}$
if and only if $u$ satisfies (\ref{key eq}) with $$\widetilde{S_J}=Q_{g, J}(u)/\|u\|^{p-2}_p.$$
Therefore, we can also define analogously the following conformal invariant
\begin{equation}\label{lambdagJ}
Y(M, g, J):=\inf\left\{Q_{g, J}(u): u\in C^{\infty}(M), u>0\right\}.
\end{equation}

\subsection{Some results from PDEs}\

There are two theorems about the equation
\begin{equation}\label{general eq}
4\frac{n-1}{n-2}\Delta_gu +hu=\lambda fu^{p-1},
\end{equation}
where $h, f\in C^{\infty}(M)$ with $f>0$, and $\lambda$ is a real number to be determined.

\begin{thm}$($See \cite[p131]{Aubin82}$)$.
Let $$I(u):=\(\displaystyle{\int_M} 4\frac{n-1}{n-2}|du|^2
+hu^2\ dV_g\)\bigg{/}\(\displaystyle{\int_M}fu^p\ dV_g\)^{\frac2p}$$
and $$\nu:=\inf\limits_{u\in H^1(M)\setminus\{0\}}I(u).$$
Then $$\nu\leq n(n-1)\omega_n^{\frac2n}(\sup f)^{-\frac2p},$$
where $\omega_n$ denotes the volume of $(\mathbb{S}^n, \mathring{g})$.
In addition, if $$\nu< n(n-1)\omega_n^{\frac2n}(\sup f)^{-\frac2p},$$
then $(\ref{general eq})$ has a smooth positive solution for $\lambda=\nu$.
\end{thm}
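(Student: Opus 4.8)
The plan is to follow Aubin's subcritical approximation method, treating the upper bound and the existence statement separately.

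\emph{Upper bound.} To prove $\nu\le n(n-1)\omega_n^{2/n}(\sup f)^{-2/p}$, I would test $I$ with a family of functions concentrating at a point $x_0\in M$ at which $f(x_0)=\sup f$. Working in $g$-normal coordinates centered at $x_0$, I transplant the Euclidean extremal (Aubin--Talenti) functions $U_\varepsilon(x)=\big(\varepsilon/(\varepsilon^2+|x|^2)\big)^{(n-2)/2}$, truncated at a fixed small radius. As $\varepsilon\to0$ the mass of $U_\varepsilon$ concentrates at $x_0$, so $\int_M hU_\varepsilon^2\,dV_g$ is of lower order relative to $\int_M|dU_\varepsilon|^2\,dV_g$, while $\int_M fU_\varepsilon^{p}\,dV_g=(f(x_0)+o(1))\int_M U_\varepsilon^{p}\,dV_g$. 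The ratio of the surviving Dirichlet and $L^p$ terms tends to $4\tfrac{n-1}{n-2}K(n,2)^{-2}$, where $K(n,2)$ is the best constant in the Euclidean Sobolev inequality. Inserting the classical value $K(n,2)^{-2}=\tfrac{n(n-2)}{4}\,\omega_n^{2/n}$ yields $I(U_\varepsilon)\to n(n-1)\omega_n^{2/n}\,f(x_0)^{-2/p}$, which is exactly the asserted bound.

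\emph{Existence below the threshold.} For each exponent $q\in(2,p)$ I set
\[
\nu_q:=\inf\Big\{\int_M 4\tfrac{n-1}{n-2}|du|^2+hu^2\,dV_g:\ u\in H^1(M),\ \int_M f|u|^q\,dV_g=1\Big\}.
\]
Since $q<p$, the embedding $H^1(M)\hookrightarrow L^q(M)$ is \emph{compact}, so the direct method produces a minimizer; replacing it by its absolute value and invoking elliptic regularity together with the strong maximum principle yields a positive smooth $u_q$ solving the subcritical equation
\[
4\tfrac{n-1}{n-2}\Delta_gu_q+hu_q=\nu_q\, f\,u_q^{q-1}.
\]
A comparison of admissible test functions shows $\nu_q\to\nu$ as $q\to p^-$, and boundedness below of the quadratic form gives a uniform $H^1$ bound on $u_q$, so after extraction $u_q\rightharpoonup u$ weakly in $H^1$ and almost everywhere. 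The decisive step is to rule out concentration of $u_q$ in the limit $q\to p$: by a Moser iteration one obtains a uniform $L^\infty$ bound unless the $L^p$-mass concentrates at a single point, and such a concentration would force $\nu\ge n(n-1)\omega_n^{2/n}(\sup f)^{-2/p}$ through the sharp Sobolev inequality, contradicting the assumed strict inequality. Hence the convergence is strong in $L^p$, the limit $u$ is nonnegative and nontrivial, and passing to the limit in the equation shows that $u$ solves (\ref{general eq}) with $\lambda=\nu$. Finally, elliptic bootstrapping gives $u\in C^\infty(M)$, and the strong maximum principle upgrades $u\ge0$, $u\not\equiv0$ to $u>0$.

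The main obstacle is precisely this last compactness step: showing that the strict inequality $\nu<n(n-1)\omega_n^{2/n}(\sup f)^{-2/p}$ prevents the loss of compactness (bubbling) of the subcritical minimizers, so that the weak limit is a genuine positive solution rather than the trivial one. This is where the sharp Sobolev constant computed in the upper bound re-enters as the exact threshold below which compactness is restored.
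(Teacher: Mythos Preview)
The paper does not give its own proof of this theorem: it is simply quoted from Aubin's monograph \cite[p.~131]{Aubin82} and then applied with the specific choice $h=\tfrac{n-1}{n-2}S_J$, $f=\tfrac{n-1}{n-2}$. Your outline is the classical Aubin subcritical approximation argument that underlies the cited result --- Aubin--Talenti bubbles at a maximum point of $f$ for the upper bound, subcritical minimizers $u_q$ with $q\uparrow p$ for existence, and the sharp Sobolev constant as the exact threshold that prevents bubbling --- so it matches the proof one would find in the reference rather than anything in the present paper. As a sketch it is correct; the places where a full write-up would need care are the ones you already flag: the uniform $L^\infty$ bound via Moser iteration and the exclusion of concentration, which is where the strict inequality $\nu<n(n-1)\omega_n^{2/n}(\sup f)^{-2/p}$ is used.
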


\begin{thm}$($See \cite[p131]{Aubin82}$)$.
Let $n\geq4$, and let $P$ be a point where $f$ achieves the maximum.
If $$h(P)-R_g(P)+\frac{n-4}2\frac{\Delta f(P)}{f(P)}<0,$$
then $(\ref{general eq})$ has a smooth positive solution for $\lambda=\nu$.
\end{thm}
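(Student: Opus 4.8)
The plan is to reduce everything to the preceding theorem: since $\nu < n(n-1)\omega_n^{2/n}(\sup f)^{-2/p}$ already forces the existence of a smooth positive solution with $\lambda=\nu$, it suffices to exhibit a single test function whose $I$-value lies strictly below this threshold. I would produce such a function by concentrating the Euclidean Sobolev extremals at the maximum point $P$ of $f$ and carrying out the classical Aubin expansion.

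First I fix geodesic normal coordinates centred at $P$ and set $u_\epsilon=\eta\,U_\epsilon$, where $U_\epsilon(x)=\bigl(\epsilon/(\epsilon^2+|x|^2)\bigr)^{(n-2)/2}$ and $\eta$ is a fixed cutoff supported in a small geodesic ball. On flat space the functions $U_\epsilon$ realise the constant $n(n-1)\omega_n^{2/n}$, and the cutoff only introduces errors that are negligible compared with the order-$\epsilon^2$ corrections computed below.

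Next I expand numerator and denominator in powers of $\epsilon$. Writing $dV_g=\bigl(1-\tfrac16\operatorname{Ric}_{ij}x^ix^j+O(|x|^3)\bigr)\,dx$ and using the normal-coordinate expansion of $g^{ij}$ (whose purely radial contribution drops out by the antisymmetry of the curvature tensor), the Dirichlet energy $\int 4\tfrac{n-1}{n-2}|du_\epsilon|^2\,dV_g$ contributes the flat leading term together with a correction proportional to $-R_g(P)\epsilon^2$, while the zeroth-order term $\int h\,u_\epsilon^2\,dV_g$ contributes $h(P)$ times $\int U_\epsilon^2\,dV_g$. A direct scaling computation gives $\int U_\epsilon^2 = O(\epsilon^2)$ for $n\geq5$ and $O(\epsilon^2|\log\epsilon|)$ for $n=4$; moreover the coefficients are arranged so that for the genuine Yamabe data $h=R_g$ the two scalar-curvature contributions cancel at order $\epsilon^2$, which is precisely why only the difference $h(P)-R_g(P)$ survives. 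Finally, expanding $f(x)=f(P)+\tfrac12\operatorname{Hess}f(P)(x,x)+O(|x|^3)$ inside $\bigl(\int f\,u_\epsilon^p\,dV_g\bigr)^{2/p}$ (the linear term integrates to zero by symmetry) and averaging over spheres produces the term $\tfrac{n-4}2\tfrac{\Delta f(P)}{f(P)}\epsilon^2$.

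Assembling these expansions yields
$$I(u_\epsilon)=n(n-1)\omega_n^{2/n}(\sup f)^{-2/p}\Bigl(1+C\epsilon^2\bigl[h(P)-R_g(P)+\tfrac{n-4}2\tfrac{\Delta f(P)}{f(P)}\bigr]+o(\epsilon^2)\Bigr)$$
for some constant $C>0$ (with the evident $|\log\epsilon|$ modification when $n=4$, where the $\Delta f$ coefficient degenerates). Since $P$ realises the maximum, $f(P)=\sup f$; hence the hypothesis makes the bracketed quantity negative, so $I(u_\epsilon)$ is strictly below the threshold for all small $\epsilon$, giving $\nu<n(n-1)\omega_n^{2/n}(\sup f)^{-2/p}$, and the preceding theorem completes the proof. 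The main obstacle is the bookkeeping of the order-$\epsilon^2$ coefficients: one must verify that the curvature correction to the Dirichlet energy, the $\int U_\epsilon^2$ contribution, and the Hessian term from the weight combine into exactly $h(P)-R_g(P)+\tfrac{n-4}2\Delta f(P)/f(P)$ with a single positive prefactor. The endpoint $n=4$ is the delicate case, since the relevant moment integrals then diverge logarithmically and the $\Delta f$ term drops out.
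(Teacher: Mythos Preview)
The paper does not actually prove this theorem: it is quoted verbatim from \cite[p.~131]{Aubin82} and used as a black box to deduce Proposition~\ref{prop 2}. So there is no ``paper's own proof'' to compare against; the relevant comparison is with Aubin's original argument.

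Your sketch is the standard Aubin proof and is correct in outline: one plugs the cut-off Euclidean extremals $u_\epsilon$ centred at the maximum point $P$ into $I$, expands numerator and denominator to order $\epsilon^2$ (with the logarithmic modification when $n=4$), and reads off the coefficient $h(P)-R_g(P)+\tfrac{n-4}{2}\Delta f(P)/f(P)$, which under the hypothesis forces $\nu$ strictly below the threshold so that the preceding theorem applies. One small point worth flagging for the write-up: in this paper $\Delta_g=-\operatorname{tr}_g\nabla^2$ is the positive Laplacian, so when you average the Hessian term $\tfrac12\operatorname{Hess}f(P)(x,x)$ over spheres you pick up $-\tfrac{1}{2n}\Delta f(P)|x|^2$; keeping track of this sign (and of the sign coming from the $-2/p$ exponent when the denominator correction is moved upstairs) is exactly the bookkeeping you identify as the main obstacle, and it is where most attempted reproductions of Aubin's computation go wrong.
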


If we take $$h=\frac{n-1}{n-2}S_J,\ f=\frac{n-1}{n-2}$$ in $(\ref{general eq})$,
then $$\nu=\(\frac{n-1}{n-2}\)^{\frac2n}Y(M, g, J)$$ and $\Delta f=0$.
Hence we have the following results:
\begin{prop}\label{prop 1}
Let $(M^n, g, J)$ be a closed, connected almost Hermitian manifold.
Then $$Y(M, g, J)\leq n(n-2)\omega_n^{\frac2n}.$$
In addition, if $$Y(M, g, J)< n(n-2)\omega_n^{\frac2n},$$
then there exists a conformal metric $\widetilde{g}$ such that
$(M^n, \widetilde{g}, J)$ has constant holomorphic d-scalar curvature $\widetilde{S_J}=Y(M, g, J)$.
\end{prop}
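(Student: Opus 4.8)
The plan is to deduce Proposition \ref{prop 1} directly from the two quoted theorems of Aubin by inserting the stated choice $h=\frac{n-1}{n-2}S_J$ and $f\equiv\frac{n-1}{n-2}$ into the general equation (\ref{general eq}). The first thing I would record is the homogeneity identity relating $\nu$ and $Y(M,g,J)$. Since $f$ is a positive constant, it factors out of both the numerator and the denominator of $I(u)$; using $p=\frac{2n}{n-2}$, so that $\frac{2}{p}=\frac{n-2}{n}$ and $1-\frac{2}{p}=\frac{2}{n}$, one finds $I(u)=\(\frac{n-1}{n-2}\)^{2/n}Q_{g,J}(u)$ for every $u$, and hence, after taking the infimum over $H^1(M)\setminus\{0\}$, the stated identity $\nu=\(\frac{n-1}{n-2}\)^{2/n}Y(M,g,J)$.

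For the upper bound I would feed this identity into the estimate $\nu\le n(n-1)\omega_n^{2/n}(\sup f)^{-2/p}$ of the first theorem. Here $\sup f=\frac{n-1}{n-2}$, so $(\sup f)^{-2/p}=\(\frac{n-1}{n-2}\)^{-(n-2)/n}$; substituting the identity for $\nu$ and cancelling one factor of $\(\frac{n-1}{n-2}\)^{2/n}$ leaves the combined exponent $-\frac{n-2}{n}-\frac{2}{n}=-1$, so that the prefactor collapses as $n(n-1)\cdot\frac{n-2}{n-1}=n(n-2)$. This yields exactly $Y(M,g,J)\le n(n-2)\omega_n^{2/n}$.

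For the existence statement, the same identity shows that the strict hypothesis $Y(M,g,J)<n(n-2)\omega_n^{2/n}$ is equivalent to the strict inequality $\nu<n(n-1)\omega_n^{2/n}(\sup f)^{-2/p}$ demanded by the first theorem, which therefore produces a smooth positive solution $u$ of (\ref{general eq}) with $\lambda=\nu$. Dividing that equation by the constant $\frac{n-1}{n-2}$ reduces it to $4\Delta_gu+S_Ju=\nu\,u^{p-1}$, i.e.\ equation (\ref{key eq}) with $\widetilde{S_J}\equiv\nu$ constant; thus $\widetilde{g}=u^{p-2}g$ already has constant holomorphic d-scalar curvature. It only remains to fix the normalization so that this constant equals $Y(M,g,J)$ rather than $\nu$. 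Replacing $u$ by $cu$ sends the constant curvature to $\nu\,c^{2-p}$, so choosing $c>0$ with $c^{p-2}=\(\frac{n-1}{n-2}\)^{2/n}$ gives $\widetilde{S_J}=\nu\(\frac{n-1}{n-2}\)^{-2/n}=Y(M,g,J)$, as required.

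Since both PDE theorems are taken as given, there is no genuinely hard analytic step here; the one point that needs care is the bookkeeping of the powers of $\frac{n-1}{n-2}$ together with the scaling step at the end, which is exactly what converts the constant $\nu$ attached to (\ref{general eq}) into the geometrically meaningful value $Y(M,g,J)$. I would double-check that the rescaling preserves positivity and smoothness of $u$ (it does, as $c$ is a positive constant) and that the sign conventions in (\ref{scalar eq}) and (\ref{*scalar eq}) feed consistently into (\ref{key eq}). Note that the second Aubin theorem and the observation $\Delta f=0$ are not needed for this proposition and are presumably recorded for later use.
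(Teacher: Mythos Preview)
Your proof is correct and follows essentially the same approach as the paper: the paper simply records the substitution $h=\frac{n-1}{n-2}S_J$, $f=\frac{n-1}{n-2}$, the identity $\nu=\(\frac{n-1}{n-2}\)^{2/n}Y(M,g,J)$, and then states Proposition~\ref{prop 1} as an immediate consequence of the first Aubin theorem. You have filled in the arithmetic the paper leaves implicit, and in particular you make explicit the final rescaling $u\mapsto cu$ needed to convert the constant $\nu$ produced by Aubin's theorem into the value $Y(M,g,J)$; the paper does not spell this step out.
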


\begin{prop}\label{prop 2}
Let $(M^n, g, J)$ be a closed, connected almost Hermitian manifold.
If there exists a point $P\in M$ such that
\begin{equation}\label{condition}
\frac{n-1}{n-2}S_J(P)-R_g(P)<0,
\end{equation}
then there exists a conformal metric $\widetilde{g}$ such that
$(M^n, \widetilde{g}, J)$ has constant holomorphic d-scalar curvature $\widetilde{S_J}=Y(M, g, J)$.
\end{prop}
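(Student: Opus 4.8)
The plan is to deduce this directly from the second PDE theorem quoted above, via the substitution $h=\frac{n-1}{n-2}S_J$, $f=\frac{n-1}{n-2}$ already fixed in the excerpt, under which $\nu=\left(\frac{n-1}{n-2}\right)^{2/n}Y(M,g,J)$. The key observation is that $f$ is a positive \emph{constant}, so $\Delta f\equiv0$ and \emph{every} point of $M$ is a maximum point of $f$; in particular the given point $P$ serves as the maximum point required by that theorem. Substituting $h(P)=\frac{n-1}{n-2}S_J(P)$ and $\Delta f(P)=0$ into the hypothesis $h(P)-R_g(P)+\frac{n-4}{2}\frac{\Delta f(P)}{f(P)}<0$, the last term drops out and the inequality reduces exactly to the assumption (\ref{condition}). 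Since $n\geq6$ gives $n\geq4$, the theorem applies and shows that the infimum $\nu$ is attained by a smooth positive function $u_0$ solving (\ref{general eq}) with $\lambda=\nu$.

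It remains to convert $u_0$ into a conformal metric of constant holomorphic d-scalar curvature equal to $Y(M,g,J)$. Because $I=\left(\frac{n-1}{n-2}\right)^{2/n}Q_{g,J}$, the functionals $I$ and $Q_{g,J}$ have identical minimizers, so $u_0$ minimizes $Q_{g,J}$ and $Q_{g,J}(u_0)=Y(M,g,J)$. As $Q_{g,J}$ is invariant under $u\mapsto cu$, I would rescale to a minimizer $u$ with $\|u\|_p=1$; being a positive critical point of $Q_{g,J}$, it solves (\ref{key eq}) with the constant $\widetilde{S_J}=Q_{g,J}(u)/\|u\|_p^{p-2}=Y(M,g,J)$, by the critical-point characterization recorded in the excerpt. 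Then $\widetilde{g}=u^{p-2}g$ is the required metric.

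Granting the second PDE theorem, this is a short verification rather than a substantial proof: the real analytic content — existence and smoothness of the minimizer from a single pointwise sign condition at the maximum of $f$ — is exactly what that theorem supplies, so I expect no serious obstacle. The only points needing care are the two flagged above: checking that the constancy of $f$ makes the hypothesis collapse \emph{precisely} onto (\ref{condition}), rather than onto some perturbed inequality, and tracking the factor $\left(\frac{n-1}{n-2}\right)^{2/n}$ relating $\nu$ and $Y(M,g,J)$ so that, after using the scale-invariance of $Q_{g,J}$, the constant curvature lands on $Y(M,g,J)$ itself.
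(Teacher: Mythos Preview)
Your proposal is correct and follows exactly the paper's approach: the paper derives both Propositions~\ref{prop 1} and~\ref{prop 2} in one stroke by making the substitution $h=\frac{n-1}{n-2}S_J$, $f=\frac{n-1}{n-2}$ in the two Aubin theorems, noting $\Delta f=0$ and $\nu=\bigl(\tfrac{n-1}{n-2}\bigr)^{2/n}Y(M,g,J)$, so that the pointwise hypothesis collapses precisely to~(\ref{condition}). Your write-up is simply more explicit than the paper's about the rescaling step that lands the constant curvature on $Y(M,g,J)$ rather than $\nu$; no substantive difference.
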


\subsection{Existence of constant holomorphic d-scalar curvature}\

Let $W$ be the Weyl tensor of $(M^n, g)$, i.e.,
$$R=-\frac{R_g}{2(n-1)(n-2)}g\owedge g+\frac1{n-2}\operatorname{Ric}\owedge g+W,$$
where $\owedge$ denotes the Kulkarni-Nomizu product, that is,
$$\begin{aligned}
h\owedge k(X, Y, Z, U):=&h(X, Z)k(Y, U)+h(Y, U)k(X, Z)\\
&-h(X, U)k(Y, Z)-h(Y, Z)k(X, U)
\end{aligned}$$
for any symmetric 2-tensors $h,k$ and any smooth vector fields $X, Y, Z, W\in\mathscr{X}(M)$ \cite[p47]{Besse}.
(Our sign convention is different from \cite{Rio-Simanca, Lee-Parker}.)
It was proved in \cite{Rio-Simanca} the following useful formula:
\begin{equation}\label{lem R-S}
(n-1)R_g^*-R_g=2(n-1)\widehat{W}(\omega^{\#}, \omega^{\#}),
\end{equation}
where $\widehat{W}$ is defined by $\widehat{W}(X\wedge Y, Z\wedge W):=W(X, Y, Z, W)$
and $\omega^{\#}$ denotes the dual tensor of the fundamental form $\omega$.

By formula $($\ref{lem R-S}$)$, inequality (\ref{condition}) is equivalent to
$$\widehat{W}(\omega^{\#}, \omega^{\#})(P)>0.$$
Therefore, we are going to prove the existence in the following three cases:
\begin{enumerate}
\item There exists $P\in M$ such that $\widehat{W}(\omega^{\#}, \omega^{\#})(P)>0$.
\item $\widehat{W}(\omega^{\#}, \omega^{\#})\equiv0$ on $M$.
\item $\widehat{W}(\omega^{\#}, \omega^{\#})\leq0$ and there exists $P\in M$ such that $\widehat{W}(\omega^{\#}, \omega^{\#})(P)<0$.
\end{enumerate}

For the case (1), Proposition \ref{prop 2} implies the following result.
\begin{prop}\label{prop c1}
Let $(M^n, g, J)$ be a closed, connected almost Hermitian manifold.
If there exists a point $P\in M$ such that
$\widehat{W}(\omega^{\#}, \omega^{\#})(P)>0$,
then there exists a conformal metric $\widetilde{g}$ such that
$(M^n, \widetilde{g}, J)$ has constant holomorphic d-scalar curvature $\widetilde{S_J}=Y(M, g, J)$.
\end{prop}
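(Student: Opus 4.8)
The plan is to observe that Proposition \ref{prop c1} is simply Proposition \ref{prop 2} read through the curvature identity (\ref{lem R-S}); all the genuine analytic content already sits in Proposition \ref{prop 2} (and, behind it, in the PDE theorems of Aubin). Hence the only task is to verify that the geometric hypothesis $\widehat{W}(\omega^{\#}, \omega^{\#})(P)>0$ is pointwise equivalent to the curvature inequality (\ref{condition}), after which the conclusion follows verbatim from Proposition \ref{prop 2}.

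First I would substitute the definition $S_J=R_g-R_g^*$ into the left-hand side of (\ref{condition}) and collect the coefficients of $R_g$ and $R_g^*$. Evaluating at $P$, this gives
\begin{align*}
\frac{n-1}{n-2}\,S_J-R_g
&=\frac{n-1}{n-2}\big(R_g-R_g^*\big)-R_g\\
&=\frac1{n-2}\big(R_g-(n-1)R_g^*\big)\\
&=-\frac1{n-2}\big((n-1)R_g^*-R_g\big)\\
&=-\frac{2(n-1)}{n-2}\,\widehat{W}(\omega^{\#}, \omega^{\#}),
\end{align*}
where the last equality is precisely formula (\ref{lem R-S}). Since the factor $\frac{2(n-1)}{n-2}$ is strictly positive for $n\geq6$, the inequality (\ref{condition}) holds at $P$ if and only if $\widehat{W}(\omega^{\#}, \omega^{\#})(P)>0$.

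Therefore, under the hypothesis of Proposition \ref{prop c1}, the point $P$ satisfies (\ref{condition}), and Proposition \ref{prop 2} supplies a conformal metric $\widetilde{g}$ for which $(M^n, \widetilde{g}, J)$ has constant holomorphic d-scalar curvature $\widetilde{S_J}=Y(M, g, J)$. I do not expect a serious obstacle here: the computation is a one-line consequence of (\ref{lem R-S}), and the existence mechanism is inherited directly from Proposition \ref{prop 2}. If anything, the only point deserving care is to track the sign of the coefficient $\frac{2(n-1)}{n-2}$ so that the strict inequalities match up exactly, ensuring that the strict hypothesis of Proposition \ref{prop c1} produces the strict inequality required in (\ref{condition}) rather than a non-strict one.
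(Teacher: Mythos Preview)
Your proposal is correct and matches the paper's approach exactly: the paper notes just before Proposition \ref{prop c1} that, by formula (\ref{lem R-S}), inequality (\ref{condition}) is equivalent to $\widehat{W}(\omega^{\#}, \omega^{\#})(P)>0$, and then simply invokes Proposition \ref{prop 2}. Your write-up merely spells out the one-line algebraic identity that the paper leaves implicit.
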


For the case (2), formula (\ref{lem R-S}) implies $(n-1)R_g^*=R_g$.
It follows that  $$S_J=R_g-R_g^*=\frac{n-2}{n-1}R_g,$$
and hence, $$Y(M, g, J)=\frac{n-2}{n-1}Y(M, g).$$
Theorem B and Theorem C in \cite{Lee-Parker} mean that
$$Y(M, g)\leq n(n-1)\omega_n^{\frac2n},$$ and the equality holds if and only if
$(M^n, g)$ is conformally equivalent to the round sphere.
Therefore, $$Y(M, g, J)= n(n-2)\omega_n^{\frac2n}$$ if and only if
$(M^n, g)$ is conformally equivalent to $(\mathbb{S}^6, \mathring{g})$, i.e.,
there exists a diffeomorphism $\varphi: M\rightarrow \mathbb{S}^6$ such that
$g$ is conformal to $\varphi^*\mathring{g}$.
Then for $(M^n, \varphi^*\mathring{g}, J)$, we still have $\widehat{W}(\omega^{\#}, \omega^{\#})\equiv0$
and hence $$S_J=\frac{n-2}{n-1}R_g=\frac{4}{5}\cdot30=24.$$
Now by Proposition \ref{prop 1}, we can conclude the case (2) with the following result.
\begin{prop}\label{prop c2}
Let $(M^n, g, J)$ be a closed, connected almost Hermitian manifold.
Suppose $\widehat{W}(\omega^{\#}, \omega^{\#})\equiv0$ on $M$.
\begin{enumerate}
\item[$(a)$] If $Y(M, g, J)< n(n-2)\omega_n^{\frac2n}$,
then there exists a metric $\widetilde{g}$ on $M$ which is conformal to $g$
and has constant holomorphic d-scalar curvature $\widetilde{S_J}=Y(M, g, J)$.
\item[$(b)$] $Y(M, g, J)=n(n-2)\omega_n^{\frac2n}$ if and only if
$(M^n, g)$ is conformally equivalent to $(\mathbb{S}^6, \mathring{g})$. If this happens,
then there exists a conformal metric $\widetilde{g}$ such that
$(M^n, \widetilde{g}, J)$ has constant holomorphic d-scalar curvature $\widetilde{S_J}=24$.
\end{enumerate}
\end{prop}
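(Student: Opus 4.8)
The plan is to reduce both claims to classical Yamabe theory through the identity forced by the hypothesis $\widehat{W}(\omega^{\#}, \omega^{\#})\equiv 0$. First I would feed this vanishing into formula (\ref{lem R-S}) to get $(n-1)R_g^*=R_g$ pointwise, whence $S_J=\frac{n-2}{n-1}R_g$ everywhere on $M$. Substituting this into the definition (\ref{QgJ}) of $Q_{g,J}$ and comparing with the Yamabe functional $Q_g$, the gradient terms agree up to the universal factor $\frac{n-1}{n-2}$ while the potential terms agree exactly, so $\frac{n-1}{n-2}Q_{g,J}(u)=Q_g(u)$ for every positive $u$. Taking infima over $u\in C^\infty(M)$ with $u>0$ then yields the scalar relation $Y(M,g,J)=\frac{n-2}{n-1}Y(M,g)$, which is the engine of the entire argument.

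Part $(a)$ is then immediate: the hypothesis $Y(M,g,J)<n(n-2)\omega_n^{2/n}$ is exactly the strict-inequality threshold of Proposition \ref{prop 1}, so that proposition directly produces a conformal metric $\widetilde{g}$ with constant holomorphic d-scalar curvature $\widetilde{S_J}=Y(M,g,J)$.

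For part $(b)$, I would use the scalar relation to transport the threshold $Y(M,g,J)=n(n-2)\omega_n^{2/n}$ into $Y(M,g)=n(n-1)\omega_n^{2/n}$. By the sharp Yamabe estimate (Theorems B and C of \cite{Lee-Parker}), $Y(M,g)\le n(n-1)\omega_n^{2/n}$ with equality if and only if $g$ is conformal to the round metric, i.e.\ $(M^n,g)$ is conformally equivalent to $(\mathbb{S}^n,\mathring{g})$. The one genuinely geometric point I would flag is the dimension restriction: because $M$ carries an almost complex structure $J$, and $\mathbb{S}^n$ admits an almost complex structure only for $n=2,6$, the standing hypothesis $n\ge 6$ forces $n=6$, so the sphere in question is precisely $(\mathbb{S}^6,\mathring{g})$. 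This gives the claimed equivalence.

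Finally, to exhibit a constant-curvature representative in the equality case—where Proposition \ref{prop 1} no longer applies, its strict-inequality hypothesis having degenerated into an equality—I would argue by hand. Take $\widetilde{g}=\varphi^*\mathring{g}$ for the conformal diffeomorphism $\varphi\colon M\to\mathbb{S}^6$. Since $\widetilde{g}$ is isometric to the round metric, it is conformally flat, so its Weyl tensor vanishes and hence $\widehat{W}(\omega^{\#},\omega^{\#})\equiv 0$ persists for $(M^6,\widetilde{g},J)$; applying $S_J=\frac{n-2}{n-1}R_g$ once more with $R_{\widetilde{g}}=30$ (the scalar curvature of the unit $\mathbb{S}^6$) gives $\widetilde{S_J}=\frac{4}{5}\cdot 30=24$, a constant. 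The mild obstacle throughout is bookkeeping—checking that compatibility of $J$ with the metric and the vanishing of $\widehat{W}(\omega^{\#},\omega^{\#})$ are both preserved under this conformal change—rather than any genuine analytic difficulty, since the existence content in the strict case has already been offloaded to Proposition \ref{prop 1}.
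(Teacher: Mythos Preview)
Your proposal is correct and follows essentially the same route as the paper: use formula (\ref{lem R-S}) to get $S_J=\frac{n-2}{n-1}R_g$ and hence $Y(M,g,J)=\frac{n-2}{n-1}Y(M,g)$, invoke Proposition \ref{prop 1} for part $(a)$, invoke the sharp equality case of Lee--Parker for part $(b)$, and in the equality case pull back the round metric and recompute $S_J=\frac{4}{5}\cdot 30=24$ using the conformal invariance of $\widehat{W}(\omega^{\#},\omega^{\#})$. Your explicit justification that $n=6$ is forced (because $\mathbb{S}^n$ admits an almost complex structure only for $n=2,6$) is a detail the paper leaves implicit.
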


For the case (3), we are going to prove
$$Y(M, g, J)< n(n-2)\omega_n^{\frac2n}$$ by the approach in \cite{Rio-Simanca}.
By the assumption of case (3), we can take a point $P\in M$ such that
$$\widehat{W}(\omega^{\#}, \omega^{\#})(P)<0.$$
Next, we need the following results.

\begin{lem}$($See \cite[Lemma 5.5]{Lee-Parker}$)$.\label{lem detg}
Let $(M, g)$ be a Riemannian manifold, and let $P\in M$.
In $g$-normal coordinates centered at $P$, the function $\det g_{ij}$ has the expansion
$$\det g_{ij}=1-\frac13\operatorname{Ric}_{ij}(P)x_ix_j+O(|x|^3).$$
\end{lem}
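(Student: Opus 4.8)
The plan is to exploit the standard Taylor expansion of the metric components in $g$-normal coordinates centered at $P$ and then apply the formula $\log\det = \tr\log$ to extract the expansion of the determinant. First I would recall the well-known fact that in normal coordinates one has $g_{ij}(0)=\delta_{ij}$, all first derivatives $\partial_k g_{ij}(0)=0$ (equivalently the Christoffel symbols vanish at $P$), and the second-order Taylor coefficient of the metric is governed by the curvature: the classical expansion reads
\begin{equation*}
g_{ij}(x)=\delta_{ij}-\tfrac13 R_{ikjl}(P)\,x_k x_l+O(|x|^3),
\end{equation*}
where $R_{ikjl}$ are the components of the Riemann tensor at $P$ (with the sign convention fixed earlier in the paper). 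This expansion is the genuine input of the lemma; everything after it is linear algebra.

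Next I would write $g_{ij}(x)=\delta_{ij}+h_{ij}(x)$ with $h_{ij}=-\tfrac13 R_{ikjl}(P)x_k x_l+O(|x|^3)$, and use the identity
\begin{equation*}
\det g_{ij}=\exp\bigl(\tr\log(I+h)\bigr)=\exp\Bigl(\tr h-\tfrac12\tr(h^2)+\cdots\Bigr).
\end{equation*}
Since $h=O(|x|^2)$, the term $\tr(h^2)$ is already $O(|x|^4)$ and may be absorbed into the error once we track only up to the order stated. Thus to the relevant order $\det g_{ij}=1+\tr h+O(|x|^3)$, and $\tr h=\sum_i h_{ii}=-\tfrac13 R_{ikil}(P)x_k x_l+O(|x|^3)$. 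Here the main computational point is the contraction: summing the first index with the third in $R_{ikil}$ produces exactly the Ricci tensor, so $\sum_i R_{ikil}(P)=\operatorname{Ric}_{kl}(P)$ with the paper's sign convention. Substituting yields
\begin{equation*}
\det g_{ij}=1-\tfrac13\operatorname{Ric}_{kl}(P)\,x_k x_l+O(|x|^3),
\end{equation*}
which is the claimed expansion after relabeling the dummy indices $k,l$ as $i,j$.

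The only genuine obstacle — and the step I would treat most carefully — is pinning down the sign and index placement in the curvature Taylor coefficient, because the paper explicitly warns (after the Weyl decomposition) that its curvature sign convention differs from \cite{Rio-Simanca, Lee-Parker}. I would therefore re-derive the second-order coefficient directly from the radial geodesic equation $\ddot x^i+\Gamma^i_{jk}\dot x^j\dot x^k=0$ and the Gauss lemma, or cite it in the form consistent with the convention $R(X,Y)Z=\nabla_{[X,Y]}Z-[\nabla_X,\nabla_Y]Z$ introduced in the Introduction, and then verify that the Ricci contraction used above matches the definition $\operatorname{Ric}(X,Y)=\tr_g\{Z\mapsto R(X,Z)Y\}$ given there. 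Once the sign is reconciled with these conventions, the coefficient $-\tfrac13$ in front of the Ricci term is forced and the lemma follows; since this is a verbatim restatement of \cite[Lemma 5.5]{Lee-Parker}, the cleanest route is to invoke that reference and merely confirm that the sign conventions agree.
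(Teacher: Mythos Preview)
Your proof is correct and is the standard argument. Note, however, that the paper does not give its own proof of this lemma: it simply quotes the statement from \cite[Lemma 5.5]{Lee-Parker} and uses it as a black box, so there is no ``paper's proof'' to compare against beyond the cited reference, whose argument is essentially the one you have written.
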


\begin{thm}$($See \cite[Theorem 5.2]{Lee-Parker}$)$.\label{thm cnc}
Let $(M, g)$ be a Riemannian manifold, and let $P\in M$.
Let $k\geq0$, and let $T$ be a symmetric $(k+2)$-tensor on $T_PM$.
Then there exists a unique homogeneous polynomial $f$ of degree $k+2$ in $g$-normal coordinates such that
the conformal metric $\widetilde{g}=e^{2f}g$ satisfies
$$\operatorname{Sym}(\widetilde{\nabla}^k\widetilde{\operatorname{Ric}}(P))=T,$$
where $\widetilde{\nabla}$ and $\widetilde{\operatorname{Ric}}$ denote the Levi-Civita connection and Ricci tensor of $(M, \widetilde{g})$ respectively.
\end{thm}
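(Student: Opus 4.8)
The plan is to reduce the statement to a finite-dimensional linear algebra problem via the conformal transformation law for the Ricci tensor. Recall that under $\widetilde{g}=e^{2f}g$ in dimension $n$ one has
\[
\widetilde{\operatorname{Ric}}=\operatorname{Ric}-(n-2)\big(\nabla^2 f-df\otimes df\big)-\big(\Delta_g f+(n-2)|df|^2\big)g,
\]
whose part linear in $f$ is $-(n-2)\nabla^2 f-(\Delta_g f)g$ (here $\Delta_g=-\operatorname{tr}_g\nabla^2$ as in the paper, so the trace piece is $+(\operatorname{tr}_g\nabla^2 f)g$). The only free datum is the homogeneous polynomial $f$ of degree $k+2$, which I would identify with a symmetric $(k+2)$-tensor $F$ on $T_PM$ through its single nonvanishing Taylor coefficient at $P$. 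Since homogeneous degree-$(k+2)$ polynomials in $n$ variables and symmetric $(k+2)$-tensors both have dimension $\binom{n+k+1}{k+2}$, the domain and codomain of the prescription map have equal dimension, so existence and uniqueness will follow once the associated linear map is shown to be an isomorphism.

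The next step is a vanishing-order bookkeeping argument showing that $F\mapsto\operatorname{Sym}(\widetilde{\nabla}^k\widetilde{\operatorname{Ric}}(P))$ is in fact affine in $F$. Because $f$ is homogeneous of degree $k+2\ge 2$, we have $f(P)=0$ and $df(P)=0$, so $df\otimes df$ and $|df|^2g$ vanish to order $2k+2$ at $P$; their derivatives of order $\le k$ vanish there, and the quadratic terms contribute nothing. Likewise $\widetilde{\Gamma}-\Gamma$ is linear in $df$ and hence vanishes to order $k+1$, so all corrections arising from replacing $\widetilde{\nabla}$ by the background $\nabla$ (and by ordinary $\partial$) involve $\le k-1$ derivatives of $\widetilde{\Gamma}-\Gamma$ at $P$ and therefore vanish. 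I would conclude
\[
\operatorname{Sym}\big(\widetilde{\nabla}^k\widetilde{\operatorname{Ric}}(P)\big)=L(F)+\operatorname{Sym}\big(\nabla^k\operatorname{Ric}_g(P)\big),
\]
where $L$ is obtained by symmetrizing the $k$-th ordinary derivatives at $P$ of the linear terms $-(n-2)\partial^2 f-(\Delta_g f)g$. As $\partial^2 f$ is homogeneous of degree $k$, these $k$-th derivatives recover $F$ up to a fixed combinatorial constant, giving $L(F)=c_1 F+c_2(\operatorname{tr}_{12}F)\odot g$ with $c_1$ a nonzero multiple of $-(n-2)$.

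It then remains to prove that $L$ is invertible. I would decompose $\operatorname{Sym}^{k+2}(T_P^*M)$ into the trace-free tensors and the image of the symmetric product with $g$; the operator $L$ preserves this splitting, acting as the nonzero scalar $c_1$ on the trace-free summand and as a block map on the trace sector whose eigenvalues are computed explicitly from $c_1$, $c_2$ and the combinatorics of $\operatorname{tr}\circ(\,\cdot\odot g)$. Using $n\ge 6$ (well beyond the $n\ge 3$ needed for the conformal Ricci formula) one checks these eigenvalues are nonzero, so $L$ is an isomorphism; solving $L(F)=T-\operatorname{Sym}(\nabla^k\operatorname{Ric}_g(P))$ produces the unique admissible $f$.

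The main obstacle I anticipate is the vanishing-order bookkeeping of the second step: one must verify carefully that every quadratic term and every correction from covariantizing with $\widetilde{\nabla}$ contributes only to derivatives of order strictly less than $k$ at $P$, so that the dependence on the top coefficient $F$ is captured exactly by the linear operator $L$ and no $F$-dependence leaks into the affine constant. The accompanying eigenvalue computation for $L$ on the trace sector is elementary but must be carried out with the correct normalization constants to guarantee nonvanishing.
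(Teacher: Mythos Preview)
The paper does not supply its own proof of this theorem; it is quoted verbatim as \cite[Theorem 5.2]{Lee-Parker} and used as a black box in the estimate for case (3). So there is nothing in the paper to compare against.

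That said, your outline is essentially the Lee--Parker argument and is sound. The reduction to an affine map on $\operatorname{Sym}^{k+2}(T_P^*M)$ via vanishing-order bookkeeping is correct: since $f$ is homogeneous of degree $k+2$, the quantities $df\otimes df$, $|df|^2$, and $\widetilde{\Gamma}-\Gamma$ vanish to high enough order that only the linear piece of the conformal Ricci formula survives after $k$ symmetrized derivatives at $P$. The invertibility of the resulting operator $L(F)=c_1F+c_2\operatorname{Sym}(g\otimes\operatorname{tr}F)$ via the harmonic (trace-free) decomposition of $\operatorname{Sym}^{k+2}$ is exactly how Lee and Parker finish; the eigenvalues are nonzero already for $n\geq 3$, so your remark that $n\geq 6$ is more than enough is accurate but the stronger hypothesis plays no role here.

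One small correction: with the paper's convention $\Delta_g=-\operatorname{tr}_g\nabla^2$, the conformal Ricci law reads
\[
\widetilde{\operatorname{Ric}}=\operatorname{Ric}-(n-2)\big(\nabla^2 f-df\otimes df\big)+\big(\Delta_g f-(n-2)|df|^2\big)g,
\]
so the linear part is $-(n-2)\nabla^2 f+(\Delta_g f)g=-(n-2)\nabla^2 f-(\operatorname{tr}_g\nabla^2 f)g$, i.e.\ the trace piece carries a minus sign rather than the plus you wrote. This changes only the explicit eigenvalues of $L$, not its invertibility, so your strategy goes through unchanged.
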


Let $l$ be a real number to be determined, and let
$$T=l\operatorname{Sym}\(-\frac12\sum\limits_{i=1}^nW(e_i, \cdot, J\cdot, Je_i)\)(P).$$
By Theorem \ref{thm cnc}, we have a homogeneous polynomial $f$ of degree $2$
in $g$-normal coordinates centered at $P$ such that the conformal metric $\widetilde{g}=e^{2f}g$ satisfies
$\widetilde{\operatorname{Ric}}(P)=T(P)$.
Since $\operatorname{tr}_gT=l\widehat{W}(\omega^{\#}, \omega^{\#})(P)$ and $f(P)=0$, we have
$$R_{\widetilde{g}}(P)=\operatorname{tr}_{\widetilde{g}}\widetilde{\operatorname{Ric}}(P)
=\operatorname{tr}_{\widetilde{g}}T=\operatorname{tr}_gT=l\widehat{W}(\omega^{\#}, \omega^{\#})(P).$$
Note that $\widehat{W}(\omega^{\#}, \omega^{\#})(P)$ is invariant under this conformal deformation.
We can assume without loss of generality that
\begin{equation}\label{s.W}
R_g(P)=l\widehat{W}(\omega^{\#}, \omega^{\#})(P),
\end{equation}
and then by formula $($\ref{lem R-S}$)$, we have
\begin{equation}\label{S_J.W}
S_J(P)=\(\frac{n-2}{n-1}l-2\)\widehat{W}(\omega^{\#}, \omega^{\#})(P).
\end{equation}
In the $g$-normal coordinates centered at $P$ (fixed above for $T$),
let $\eta$ be a radial cut-off function supported in the ball
$B_{2\varepsilon}$ with $\eta=1$ in $B_{\varepsilon}$.
We consider the test function $\varphi=\eta u_{\alpha}$, where
$$u_{\alpha}(x):=\(\frac{\alpha}{|x|^2+\alpha^2}\)^{\frac{n-2}2}$$ and $0<\alpha\ll\varepsilon\ll1$.
The function $u_\alpha$ satisfies the elliptic equation
$$\Delta u_{\alpha}+n(n-2)u_{\alpha}^{p-1}=0$$ on $\mathbb{R}^n$,
and $$4n(n-1)\(\int_{\mathbb{R}^n} |u_\alpha|^p\ dx\)^{\frac2n}
=\frac{\displaystyle{\int_{\mathbb{R}^n}} 4\frac{n-1}{n-2}|du_\alpha|^2\ dx}
{\(\displaystyle{\int_{\mathbb{R}^n}} |u_\alpha|^p\ dx\)^{\frac2p}}
=Y(\mathbb{S}^n, \mathring{g})=n(n-1)\omega_n^{\frac2n}.$$
Hence we have the estimate (see \cite[Chapter 5]{Schoen-Yau} for detailed calculations)
\begin{equation}\label{estim u}
\int_{B_{\varepsilon}}|du_{\alpha}|^2dx\leq\frac14n(n-2)\omega_n^{\frac2n}\|\varphi\|_p^2.
\end{equation}
Setting $r=|x|$, we also have the following lemma.
\begin{lem}$($See \cite[Lemma 3.5]{Lee-Parker}$)$.\label{lem u}
Suppose $k>-n$. Then as $\alpha\rightarrow0$,
$$\int_0^{\varepsilon}r^ku_{\alpha}^2r^{n-1} dr=\left\{
\begin{aligned}
O(\alpha^{k+2})\ \ \ \ \ \ \ \ \,&\mbox{if}\ n>k+4;\\
O(\alpha^{k+2}\ln\frac1{\alpha})\ \ \ &\mbox{if}\ n=k+4;\\
O(\alpha^{n-2})\ \ \ \ \ \ \ \ \,&\mbox{if}\ n<k+4.
\end{aligned}\right.$$
\end{lem}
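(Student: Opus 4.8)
The plan is to reduce the integral to a pure scaling form and then read off the three regimes from the growth of the resulting integrand at its endpoints. First I would write out the integrand explicitly, using $u_{\alpha}^2=\alpha^{n-2}(r^2+\alpha^2)^{-(n-2)}$, so that
$$\int_0^{\varepsilon}r^k u_{\alpha}^2\, r^{n-1}\,dr=\alpha^{n-2}\int_0^{\varepsilon}\frac{r^{n+k-1}}{(r^2+\alpha^2)^{n-2}}\,dr.$$
The whole $\alpha$-dependence now lives in this one-parameter family of integrals, and the only thing that can happen as $\alpha\to0$ is that the mass concentrates near $r=0$.

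The key step is the substitution $r=\alpha s$, which rescales the interval $[0,\varepsilon]$ to $[0,\varepsilon/\alpha]$ and collects all powers of $\alpha$ into a single prefactor. Tracking the exponents carefully, $r^{n+k-1}$ contributes $\alpha^{n+k-1}$, the denominator contributes $\alpha^{-2(n-2)}$, and $dr$ contributes $\alpha$; together with the outer $\alpha^{n-2}$ this gives $\alpha^{(n+k-1)+(n-2)-2(n-2)+1}=\alpha^{k+2}$, so that
$$\int_0^{\varepsilon}r^k u_{\alpha}^2\, r^{n-1}\,dr=\alpha^{k+2}\int_0^{\varepsilon/\alpha}\frac{s^{n+k-1}}{(s^2+1)^{n-2}}\,ds.$$
It remains to understand the growth of $J(\alpha):=\int_0^{\varepsilon/\alpha}s^{n+k-1}(s^2+1)^{-(n-2)}\,ds$ as $\alpha\to0$, i.e.\ as the upper limit tends to $\infty$.

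Next I would analyze the integrand $\phi(s)=s^{n+k-1}(s^2+1)^{-(n-2)}$ at both endpoints. Near $s=0$ we have $\phi(s)\sim s^{n+k-1}$, which is integrable precisely because the hypothesis $k>-n$ forces $n+k-1>-1$; this is the only place that assumption is needed, and it ensures the behavior of $J(\alpha)$ is dictated entirely by the upper limit. Near $s=\infty$ we have $\phi(s)\sim s^{(n+k-1)-2(n-2)}=s^{k-n+3}$, and the exponent $k-n+3$ crosses the convergence threshold $-1$ exactly when $n=k+4$. This trichotomy yields the three cases. If $n>k+4$ the tail is integrable, so $J(\alpha)$ converges to a finite constant and the quantity is $O(\alpha^{k+2})$. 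If $n<k+4$ then $k-n+3>-1$ and the tail grows polynomially, $\int^{\varepsilon/\alpha}s^{k-n+3}\,ds\sim(\varepsilon/\alpha)^{k-n+4}$, so $J(\alpha)=O(\alpha^{n-k-4})$ and the product is $O(\alpha^{k+2}\cdot\alpha^{n-k-4})=O(\alpha^{n-2})$. In the borderline case $n=k+4$ the integrand decays like $s^{-1}$, giving $\int^{\varepsilon/\alpha}s^{-1}\,ds\sim\ln(\varepsilon/\alpha)=O(\ln\frac{1}{\alpha})$ and hence $O(\alpha^{k+2}\ln\frac{1}{\alpha})$.

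I do not expect a genuine obstacle here: the argument is an elementary rescaling computation. The only points demanding care are the accurate bookkeeping of the $\alpha$-exponent through the substitution $r=\alpha s$, and the correct identification of the critical threshold $n=k+4$ at which the tail switches from convergent to divergent and the logarithmic factor appears. One should also verify that the fixed, $\alpha$-independent contribution from $s$ bounded away from infinity is $O(1)$ and therefore subleading, which is exactly what the condition $k>-n$ guarantees.
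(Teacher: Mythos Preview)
Your argument is correct and is the standard proof of this estimate: the paper does not supply its own proof but simply cites \cite[Lemma 3.5]{Lee-Parker}, and the approach there is precisely the rescaling $r=\alpha s$ followed by the trichotomy on the tail exponent $k-n+3$ that you carry out. Your bookkeeping of the $\alpha$-power, the identification of the threshold $n=k+4$, and the use of the hypothesis $k>-n$ for integrability at $s=0$ are all accurate.
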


By Lemma \ref{lem detg} and Taylor's Theorem, we have
$$\begin{aligned}
Q_{g, J}(\varphi)\|\varphi\|_p^2=&\int_M 4|d\varphi|^2+S_J\varphi^2\ dV_g\\
=&\int_{B_{\varepsilon}} 4|du_{\alpha}|^2+S_Ju_{\alpha}^2\ dV_g
+\int_{B_{2\varepsilon}\setminus B_{\varepsilon}} 4|d\varphi|^2+S_J\varphi^2\ dV_g\\
\leq&\int_{B_{\varepsilon}}4|du_{\alpha}|^2\(1-\frac16\operatorname{Ric}_{ij}(P)x_ix_j+O(r^3)\) dx\\
&+\int_{B_{\varepsilon}}S_Ju_{\alpha}^2\(1+O(r^2)\) dx+E_1\\
\leq&4\int_{B_{\varepsilon}}|du_{\alpha}|^2dx
-\frac23\int_{B_{\varepsilon}}\operatorname{Ric}_{ij}(P)x_ix_j|du_{\alpha}|^2\ dx
+\int_{B_{\varepsilon}}|du_{\alpha}|^2O(r^3)\ dx\\
&+\int_{B_{\varepsilon}}\(S_J(P)+O(r)\)u_{\alpha}^2\ dx
+\int_{B_{\varepsilon}}S_Ju_{\alpha}^2O(r^2)\ dx+E_1\\
=&:4\int_{B_{\varepsilon}}|du_{\alpha}|^2dx+E_1+E_2+E_3,\\
\end{aligned}$$ where
$$E_1=\int_{B_{2\varepsilon}\setminus B_{\varepsilon}}\(4|d\varphi|^2+S_J\varphi^2\)\(1+O(r^2)\) dx,$$
$$E_2=\int_{B_{\varepsilon}}|du_{\alpha}|^2O(r^3)\ dx
+\int_{B_{\varepsilon}}O(r)u_{\alpha}^2\ dx+\int_{B_{\varepsilon}}S_Ju_{\alpha}^2O(r^2)\ dx,$$
$$E_3=-\frac23\int_{B_{\varepsilon}}\operatorname{Ric}_{ij}(P)x_ix_j|du_{\alpha}|^2\ dx
+\int_{B_{\varepsilon}}S_J(P)u_{\alpha}^2\ dx.$$
Note that $u_{\alpha}^2(x)\leq\alpha^{n-2}r^{4-2n}$ and
$$|du_{\alpha}|^2=(n-2)^2\frac{|x|^2}{\alpha^2}\(\frac{\alpha}{|x|^2+\alpha^2}\)^n
=(n-2)^2\(\frac{r}{r^2+\alpha^2}\)^2u_{\alpha}^2.$$
Then by Lemma \ref{lem u}, it is not hard to see that
\begin{equation}\label{E1}
\begin{aligned}
E_1\leq C(n)\alpha^{n-2}\int_{\varepsilon}^{2\varepsilon}r^{1-n}\ dr=O(\alpha^{n-2})
\end{aligned}
\end{equation}
and for $n\geq6$,
\begin{equation}\label{E2}
E_2\leq C(n)\int_0^{\varepsilon}ru_{\alpha}^2r^{n-1}\ dr=O(\alpha^3).
\end{equation}
Finally, we deal with $E_3$.
By (\ref{s.W}) and (\ref{S_J.W}), we have
$$\begin{aligned}
E_3&=-\frac23\frac{\omega_{n-1}}{n}R_g(P)\int_0^\varepsilon r^{n+1}|du_{\alpha}|^2\ dr
+\omega_{n-1}S_J(P)\int_0^\varepsilon r^{n-1}u_{\alpha}^2\ dr\\
&=\omega_{n-1}\widehat{W}(\omega^{\#}, \omega^{\#})(P)\int_0^\varepsilon
\[-\frac{2l}{3n}r^{n+1}|du_{\alpha}|^2+\(\frac{n-2}{n-1}l-2\)r^{n-1}u_{\alpha}^2\] dr\\
&=\omega_{n-1}\widehat{W}(\omega^{\#}, \omega^{\#})(P)
\int_0^\varepsilon\[-\frac{2(n-2)^2r^4}{3n(r^2+\alpha^2)^2}l+\frac{n-2}{n-1}l-2\]r^{n-1}u_{\alpha}^2\ dr.
\end{aligned}$$
Setting $r=\alpha\sigma$ and $l=-\frac{3n(n-1)}{n-2}$, we have
$$\begin{aligned}
&\int_0^{\varepsilon}
\[-\frac{2(n-2)^2r^4}{3n(r^2+\alpha^2)^2}l+\frac{n-2}{n-1}l-2\]r^{n-1}u_{\alpha}^2\ dr\\
=&\alpha^2\int_0^{\frac\varepsilon\alpha}
\[-\frac{2(n-2)^2\sigma^4}{3n(\sigma^2+1)^2}l+\frac{n-2}{n-1}l-2\]
\frac{\sigma^{n-1}}{(\sigma^2+1)^{n-2}}\ d\sigma\\
=&\alpha^2\int_0^{\frac\varepsilon\alpha}
\[-\frac{2(n-2)^2}{3n}\sigma^4l+\(\frac{n-2}{n-1}l-2\)(\sigma^2+1)^2\]
\frac{\sigma^{n-1}}{(\sigma^2+1)^n}\ d\sigma\\
=&\alpha^2\int_0^{\frac\varepsilon\alpha}
\[(2n^2-9n+2)\sigma^4-2(3n+2)\sigma^2-(3n+2)\]
\frac{\sigma^{n-1}}{(\sigma^2+1)^n}\ d\sigma.
\end{aligned}$$
Basic calculations show that
$$\lim_{a\rightarrow+\infty}\int_0^a
\[(2n^2-9n+2)\sigma^4-2(3n+2)\sigma^2-(3n+2)\]\frac{\sigma^{n-1}}{(\sigma^2+1)^n}\ d\sigma
=c(n)>0,$$
where $c(n)=
\frac{(m^2-2m-1)(m-1)!^2}{(m-1)(m-2)(2m-3)!}$ with $n=2m\geq6$.
Hence for sufficiently small $\alpha$, there is a positive constant $C(n)$ such that
\begin{equation}\label{E3}
E_3=C(n)\widehat{W}(\omega^{\#}, \omega^{\#})(P)\alpha^2.
\end{equation}
Therefore, (\ref{estim u})-(\ref{E3}) imply that
$$Q_{g, J}(\varphi)\|\varphi\|_p^2\leq
n(n-2)\omega_n^{\frac2n}\|\varphi\|_p^2+C(n)\widehat{W}(\omega^{\#}, \omega^{\#})(P)\alpha^2+o(\alpha^2).$$
As $\widehat{W}(\omega^{\#}, \omega^{\#})(P)<0$, by taking $\alpha$ sufficiently small, we get
$$Y(M, g, J)\leq Q_{g, J}(\varphi)<n(n-2)\omega_n^{\frac2n}.$$
Then by Proposition \ref{prop 1}, we have proved
\begin{prop}\label{prop c3}
Let $(M^n, g, J)$ be a closed, connected almost Hermitian manifold, $n\geq6$.
If $\widehat{W}(\omega^{\#}, \omega^{\#})\leq0$ and there exists $P\in M$ such that $\widehat{W}(\omega^{\#}, \omega^{\#})(P)<0$,
then there exists a conformal metric $\widetilde{g}$ such that
$(M^n, \widetilde{g}, J)$ has constant holomorphic d-scalar curvature $\widetilde{S_J}=Y(M, g, J)$.
\end{prop}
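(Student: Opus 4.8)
The plan is to establish the strict inequality $Y(M,g,J) < n(n-2)\omega_n^{2/n}$ and then to conclude by Proposition \ref{prop 1}, which already yields a conformal metric of constant holomorphic d-scalar curvature $\widetilde{S_J}=Y(M,g,J)$ as soon as that strict inequality holds. All of the work thus lies in exhibiting a single test function whose quotient $Q_{g,J}$ falls below the conformal threshold $n(n-2)\omega_n^{2/n}$, in the spirit of the Aubin--Schoen test-function method for the classical Yamabe problem.

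First I would fix, using the hypothesis, a point $P\in M$ with $\widehat{W}(\omega^{\#},\omega^{\#})(P)<0$, and normalize the geometry there. Applying Theorem \ref{thm cnc} to the symmetric $2$-tensor $T=l\,\operatorname{Sym}\!\big(-\tfrac12\sum_i W(e_i,\cdot,J\cdot,Je_i)\big)(P)$, I can conformally deform $g$ so that $\widetilde{\operatorname{Ric}}(P)=T(P)$, where $l\in\mathbb{R}$ is a parameter kept free for now. Because this deformation fixes $\widehat{W}(\omega^{\#},\omega^{\#})(P)$, taking traces gives $R_g(P)=l\,\widehat{W}(\omega^{\#},\omega^{\#})(P)$, and formula (\ref{lem R-S}) then forces $S_J(P)=\big(\tfrac{n-2}{n-1}l-2\big)\widehat{W}(\omega^{\#},\omega^{\#})(P)$. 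The point of this normalization is that both curvature scalars at $P$ are now expressed through the single negative quantity $\widehat{W}(\omega^{\#},\omega^{\#})(P)$ with a coefficient I can tune through $l$.

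Next I would take the concentrating test functions $\varphi=\eta u_\alpha$ built from the sphere bubble $u_\alpha$ and a radial cutoff $\eta$, and expand $Q_{g,J}(\varphi)\|\varphi\|_p^2=\int_M 4|d\varphi|^2+S_J\varphi^2\,dV_g$ in $g$-normal coordinates at $P$. Inserting the volume expansion from Lemma \ref{lem detg} and the Taylor expansion of $S_J$ splits this into the conformally optimal main term $4\int_{B_\varepsilon}|du_\alpha|^2\,dx$, bounded by $n(n-2)\omega_n^{2/n}\|\varphi\|_p^2$ through (\ref{estim u}), together with error terms $E_1,E_2,E_3$. The annular tail $E_1$ and the higher-order interior terms $E_2$ are handled by Lemma \ref{lem u}: for $n\geq6$ they are $O(\alpha^{n-2})$ and $O(\alpha^3)$, hence both $o(\alpha^2)$. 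This is precisely where the dimension hypothesis enters, since in low dimensions these contributions would be comparable to the decisive $\alpha^2$ term.

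The main obstacle is the exact evaluation of the remaining term $E_3=-\tfrac23\int_{B_\varepsilon}\operatorname{Ric}_{ij}(P)x_ix_j|du_\alpha|^2\,dx+\int_{B_\varepsilon}S_J(P)u_\alpha^2\,dx$, which carries the leading $\alpha^2$ contribution. Substituting the normalized values of $R_g(P)$ and $S_J(P)$, passing to $r=\alpha\sigma$, and making the specific choice $l=-\tfrac{3n(n-1)}{n-2}$, I would collapse $E_3$ into $\omega_{n-1}\widehat{W}(\omega^{\#},\omega^{\#})(P)\,\alpha^2$ times the convergent integral $\int_0^\infty\big[(2n^2-9n+2)\sigma^4-2(3n+2)\sigma^2-(3n+2)\big]\sigma^{n-1}(\sigma^2+1)^{-n}\,d\sigma$, and verify by a beta-function computation that its value $c(n)$ is strictly positive for every even $n\geq6$. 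Consequently $E_3=C(n)\widehat{W}(\omega^{\#},\omega^{\#})(P)\alpha^2<0$, and collecting the estimates yields $Q_{g,J}(\varphi)\leq n(n-2)\omega_n^{2/n}+C(n)\widehat{W}(\omega^{\#},\omega^{\#})(P)\alpha^2+o(\alpha^2)$; choosing $\alpha$ small makes the right-hand side strictly less than $n(n-2)\omega_n^{2/n}$, whence $Y(M,g,J)<n(n-2)\omega_n^{2/n}$ and Proposition \ref{prop 1} completes the argument.
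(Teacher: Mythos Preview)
Your proposal is correct and follows essentially the same route as the paper: the same conformal normalization at $P$ via Theorem \ref{thm cnc} with the tensor $T$ and free parameter $l$, the same Aubin-type test function $\varphi=\eta u_\alpha$, the same splitting into $E_1,E_2,E_3$ with $E_1=O(\alpha^{n-2})$, $E_2=O(\alpha^3)$ via Lemma \ref{lem u}, and the same choice $l=-\tfrac{3n(n-1)}{n-2}$ leading to $E_3=C(n)\widehat{W}(\omega^{\#},\omega^{\#})(P)\alpha^2$ with $C(n)>0$, after which Proposition \ref{prop 1} concludes.
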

Combining Proposition \ref{prop c1}, \ref{prop c2} and \ref{prop c3}, we have proven the following theorem.
\begin{thm}\label{thm chsc}
Let $(M^n, g, J)$ be a closed, connected almost Hermitian manifold, $n\geq 6$.
Then there exists a conformal metric with constant holomorphic d-scalar curvature
$\widetilde{S_J}=Y(M, g, J)$.
\end{thm}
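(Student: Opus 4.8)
The plan is to reduce everything to the strict Aubin inequality $Y(M,g,J) < n(n-2)\omega_n^{\frac2n}$: once this holds, Proposition \ref{prop 1} produces a positive minimizer of $Q_{g,J}$, hence a solution of (\ref{key eq}) and a conformal metric whose holomorphic d-scalar curvature is the constant $Y(M,g,J)$. By formula (\ref{lem R-S}) the sign of the contracted Weyl term $\widehat{W}(\omega^{\#},\omega^{\#})$ controls the relevant curvature comparison, so I would organize the argument as a trichotomy according to whether $\widehat{W}(\omega^{\#},\omega^{\#})$ is somewhere positive, identically zero, or everywhere nonpositive but somewhere strictly negative.

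In the first case, if $\widehat{W}(\omega^{\#},\omega^{\#})(P) > 0$ at some $P$, then (\ref{lem R-S}) turns this into the pointwise condition (\ref{condition}), and Proposition \ref{prop 2} (equivalently Proposition \ref{prop c1}) directly supplies the minimizer. In the second case, $\widehat{W}(\omega^{\#},\omega^{\#}) \equiv 0$ forces $S_J = \frac{n-2}{n-1}R_g$, whence $Y(M,g,J) = \frac{n-2}{n-1}Y(M,g)$ and the problem collapses onto the classical Yamabe constant; invoking the sharp bound $Y(M,g) \leq n(n-1)\omega_n^{\frac2n}$ with equality only for the conformal round sphere, Proposition \ref{prop c2} settles both the strict subcritical case and the borderline $\mathbb{S}^6$ case, where one uses the explicit value $S_J = 24$. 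These two cases are routine given the cited PDE results.

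The real work, and the main obstacle, is the third case $\widehat{W}(\omega^{\#},\omega^{\#}) \leq 0$ with strict negativity at some $P$, where no pointwise sign of (\ref{condition}) is available and one must instead build a concentrating test function. I would first exploit the conformal freedom of Theorem \ref{thm cnc} to normalize $\widetilde{\operatorname{Ric}}(P)$ so that, with a free parameter $l$, the relations (\ref{s.W}) and (\ref{S_J.W}) hold at $P$; then insert the standard bubble $\varphi = \eta u_{\alpha}$ into $Q_{g,J}$ and expand $Q_{g,J}(\varphi)\|\varphi\|_p^2$ using the volume expansion of Lemma \ref{lem detg} together with the concentration asymptotics of Lemma \ref{lem u}. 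For $n \geq 6$ the cutoff and higher-order remainders $E_1, E_2$ are $o(\alpha^2)$, so the decisive term is $E_3$; the delicate point is to choose $l = -\frac{3n(n-1)}{n-2}$ so that the competing Ricci and $S_J$ contributions combine into a single radial integral whose limit $c(n)$ is a \emph{positive} constant for $n = 2m \geq 6$. Granting this, $E_3 = C(n)\,\widehat{W}(\omega^{\#},\omega^{\#})(P)\,\alpha^2$ with $C(n) > 0$ is strictly negative, forcing $Q_{g,J}(\varphi) < n(n-2)\omega_n^{\frac2n}$ for small $\alpha$ and closing the case through Proposition \ref{prop 1}. I expect verifying the positivity of $c(n)$ and bookkeeping the error terms to be the crux; combining the three cases (Propositions \ref{prop c1}, \ref{prop c2}, \ref{prop c3}) then yields the theorem.
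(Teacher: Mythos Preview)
Your proposal is correct and follows the paper's approach exactly: the proof of Theorem~\ref{thm chsc} in the paper is precisely the one-line synthesis ``Combining Proposition \ref{prop c1}, \ref{prop c2} and \ref{prop c3}'', and your trichotomy on the sign of $\widehat{W}(\omega^{\#},\omega^{\#})$, the reduction to the classical Yamabe constant in the identically-zero case, and the Aubin-type bubble test with the conformal normalization $l=-\frac{3n(n-1)}{n-2}$ in the strictly-negative case all mirror the paper's Section~2.3 verbatim.
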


\section{Conformal equivalence and prescribing curvature}\label{sec-phsc}
\subsection{Conformal equivalence for almost Hermitian manifolds}\

Let $(M^n, g, J)$ be an almost Hermitian manifold, and let $\varphi: M\rightarrow M$ be a diffeomorphism.
Since $J$ is not necessarily compatible with $\varphi^*g$, we consider another almost complex structure
$J_\varphi:=d\varphi^{-1}\circ J\circ d\varphi$.
Since $$\varphi^*g(J_\varphi X, J_\varphi Y)
=g(J\circ d\varphi(X), J\circ d\varphi(Y))=g(d\varphi(X), d\varphi(Y))=\varphi^*g(X, Y),$$
$(M^n, \varphi^*g, J_\varphi)$ is an almost Hermitian manifold.
For the sake of completeness, we verify some basic properties as follows.


\begin{lem}\label{lem CE}
Let $(M^n, g, J)$ be an almost Hermitian manifold, and let $\varphi: M\rightarrow M$ be a diffeomorphism.
\begin{enumerate}
\item[$(1)$] The $\verb"*"$-Ricci tensor of $(M^n, \varphi^*g, J_\varphi)$ is $\varphi^*\operatorname{Ric}^*$.
\item[$(2)$] The $\verb"*"$-scalar curvature of $(M^n, \varphi^*g, J_\varphi)$ is $R_g^*\circ\varphi$.
\item[$(3)$] The holomorphic d-scalar curvature of $(M^n, \varphi^*g, J_\varphi)$ is $S_J\circ\varphi$.
\end{enumerate}
\end{lem}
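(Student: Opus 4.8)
The plan is to recognise $\varphi$ as an \emph{isomorphism of almost Hermitian manifolds} from $(M^n, \varphi^*g, J_\varphi)$ to $(M^n, g, J)$, and then to read off all three identities as instances of the naturality of curvature invariants under such isomorphisms. Indeed, the definition $(\varphi^*g)(X, Y) = g(d\varphi\, X, d\varphi\, Y)$ says precisely that $d\varphi$ sends the $\varphi^*g$-inner product to the $g$-inner product, so $\varphi\colon (M, \varphi^*g)\to (M, g)$ is an isometry; and $J_\varphi = d\varphi^{-1}\circ J\circ d\varphi$ gives the intertwining relation $d\varphi\circ J_\varphi = J\circ d\varphi$. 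Thus $\varphi$ respects both structures simultaneously.

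Next I would record the two transformation laws that carry the computation. First, because $\varphi$ is an isometry its Levi-Civita connection is $\varphi$-related to that of $g$, so the curvature operators are $\varphi$-related: writing $\widehat{R}$ for the curvature of $(M, \varphi^*g)$, one has $d\varphi\big(\widehat{R}(X, Z)W\big) = R(d\varphi\, X, d\varphi\, Z)(d\varphi\, W)$. Second, the $*$-Ricci tensor is defined as the trace of an endomorphism, and the trace of an endomorphism is invariant under conjugation by the isomorphism $d\varphi$; for the $*$-scalar curvature, which is a \emph{metric} trace of the $*$-Ricci tensor, I would instead evaluate in a $\varphi^*g$-orthonormal frame $(e_i)$ at $x$, whose image $(d\varphi\, e_i)$ is a $g$-orthonormal frame at $\varphi(x)$.

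Part $(1)$ is then a direct substitution. Writing the $*$-Ricci endomorphism of $(\varphi^*g, J_\varphi)$ as $Z\mapsto -J_\varphi\big(\widehat{R}(X, Z)(J_\varphi Y)\big)$ and conjugating by $d\varphi$, the intertwining relation turns each $J_\varphi$ into $J$ and the curvature law turns $\widehat{R}$ into $R$ with arguments $d\varphi\,X, d\varphi\,Z, d\varphi\,Y$; since the trace is conjugation-invariant, the result is $\operatorname{Ric}^*(d\varphi\, X, d\varphi\, Y)$ at $\varphi(x)$, which is exactly $(\varphi^*\operatorname{Ric}^*)(X, Y)$. Parts $(2)$ and $(3)$ follow formally: taking the metric trace with respect to $\varphi^*g$ of the identity in $(1)$ over the frame $(e_i)$ gives $R^*_{\varphi^*g, J_\varphi} = (\operatorname{tr}_g\operatorname{Ric}^*)\circ\varphi = R_g^*\circ\varphi$; and since the ordinary scalar curvature is the classical isometry invariant, $R_{\varphi^*g} = R_g\circ\varphi$, subtracting yields the holomorphic d-scalar curvature $S_{J_\varphi} = R_{\varphi^*g} - R^*_{\varphi^*g, J_\varphi} = (R_g - R_g^*)\circ\varphi = S_J\circ\varphi$.

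I expect no genuine obstacle: the content is the standard fact that isometries preserve curvature together with the bookkeeping of the intertwining relation. The only point requiring care is keeping track of where $d\varphi$ and $d\varphi^{-1}$ act inside the trace in part $(1)$, and making sure the relation $d\varphi\circ J_\varphi = J\circ d\varphi$ is applied to each occurrence of $J_\varphi$.
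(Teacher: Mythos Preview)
Your proposal is correct and takes essentially the same approach as the paper: both recognise $\varphi$ as an isometry from $(M,\varphi^*g)$ to $(M,g)$ intertwining $J_\varphi$ with $J$, use the resulting $\varphi$-relatedness of the curvature operators, and then read off the trace identities. The only cosmetic difference is that the paper computes the trace in part~(1) by fixing a $g$-orthonormal frame $\{e_i\}$ and using the pulled-back frame $\{d\varphi^{-1}(e_i)\}$ for $\varphi^*g$, whereas you phrase the same step as conjugation-invariance of the trace of an endomorphism; parts~(2) and~(3) are handled identically.
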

\begin{proof}
To prove (1), assume $\{e_i\}$ is a smooth local orthonormal frame of $(M^n, g)$.
Then $\{d\varphi^{-1}(e_i)\}$ is a smooth local orthonormal frame of $(M^n, \varphi^*g)$
and thus the $\verb"*"$-Ricci tensor of $(M^n, \varphi^*g, J_\varphi)$ is
$$\begin{aligned}
(X, Y)\mapsto&-\sum_{i=1}^{n}
\varphi^*g(J_\varphi(\varphi^*R(X, d\varphi^{-1}(e_i))(J_\varphi Y)), d\varphi^{-1}(e_i))\\
=&\sum_{i=1}^{n}\varphi^*g(\varphi^*R(X, d\varphi^{-1}(e_i))(J_\varphi Y), J_\varphi\circ d\varphi^{-1}(e_i))\\
=&\sum_{i=1}^{n}\varphi^*R(X, d\varphi^{-1}(e_i), J_\varphi Y, d\varphi^{-1}\circ J(e_i))\\
=&\sum_{i=1}^{n}R(d\varphi(X), e_i, J\circ d\varphi(Y), Je_i)\\
=&(\varphi^*\operatorname{Ric}^*)(X, Y).
\end{aligned}$$
$(2)$ follows from $(1)$ by taking trace.
Since the scalar curvature of $(M^n, \varphi^*g)$ is $R_g\circ\varphi$,
$(3)$ follows from $(2)$ immediately.
\end{proof}

Consider the metric $\widehat{g}=\varphi^*(u^{p-2}g)$ on $M$, where $p=\frac{2n}{n-2}$.
Then Lemma \ref{lem CE} and equation (\ref{key eq}) imply the following lemma.
\begin{lem}\label{S_J CE}
Suppose $S_J$ and $\widehat{S_J}$ denote the holomorphic d-scalar curvature of
$(M^n, g, J)$ and $(M^n, \widehat{g}, J_\varphi)$, respectively.
Then $$4\Delta_gu +S_Ju=\(\widehat{S_J}\circ\varphi^{-1}\)u^{p-1}.$$
\end{lem}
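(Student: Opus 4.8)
The plan is to reduce the lemma to a direct combination of Lemma~\ref{lem CE}(3) with the key equation (\ref{key eq}), so that no new curvature computation is needed. The essential observation is that the auxiliary almost complex structure $J_\varphi = d\varphi^{-1}\circ J\circ d\varphi$ depends only on $J$ and $\varphi$, and not on the background metric. Consequently, Lemma~\ref{lem CE}(3) applies verbatim not merely to $(M^n, g, J)$ but to the conformally rescaled almost Hermitian manifold $(M^n, \widetilde{g}, J)$ with $\widetilde{g} = u^{p-2}g$, producing precisely the same $J_\varphi$.

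First I would apply part (3) of Lemma~\ref{lem CE} to the almost Hermitian manifold $(M^n, \widetilde{g}, J)$. Writing $\widetilde{S_J}$ for the holomorphic d-scalar curvature of $(M^n, \widetilde{g}, J)$, the lemma yields that the holomorphic d-scalar curvature of $(M^n, \varphi^*\widetilde{g}, J_\varphi)$ equals $\widetilde{S_J}\circ\varphi$. Since by definition $\widehat{g} = \varphi^*(u^{p-2}g) = \varphi^*\widetilde{g}$, and the compatible almost complex structure attached to $\widehat{g}$ is exactly $J_\varphi$, this identifies $\widehat{S_J} = \widetilde{S_J}\circ\varphi$, equivalently $\widehat{S_J}\circ\varphi^{-1} = \widetilde{S_J}$.

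Finally I would substitute this identity into the key equation (\ref{key eq}), namely $4\Delta_g u + S_J u = \widetilde{S_J}\,u^{p-1}$, to obtain $4\Delta_g u + S_J u = (\widehat{S_J}\circ\varphi^{-1})\,u^{p-1}$, which is the claimed formula.

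The proof is short, and the only point requiring genuine care—rather than a real obstacle—is the bookkeeping needed to confirm that Lemma~\ref{lem CE} may be invoked after the conformal change. One must check that the $J_\varphi$ appearing in $(M^n, \widehat{g}, J_\varphi)$ coincides with the one that the lemma produces when its hypothesis is taken to be $(M^n, \widetilde{g}, J)$; this is immediate from the metric-independence of the construction $J_\varphi = d\varphi^{-1}\circ J\circ d\varphi$. Everything else is a substitution.
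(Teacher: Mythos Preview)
Your proof is correct and follows essentially the same approach as the paper, which simply states that the lemma follows from Lemma~\ref{lem CE} and equation~(\ref{key eq}); you have spelled out the two-step substitution (apply Lemma~\ref{lem CE}(3) to $(M^n,\widetilde{g},J)$ to get $\widehat{S_J}=\widetilde{S_J}\circ\varphi$, then plug into~(\ref{key eq})) exactly as intended.
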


\subsection{Prescribing holomorphic d-scalar curvature}\

Let $(M^n, g, J)$ be a closed, connected almost Hermitian manifold with holomorphic d-scalar curvature $S_J$.
Given $K\in C^{\infty}(M)$, we want to realize $K$ as
the holomorphic d-scalar curvature of some $(M^n, \widehat{g}, J_\varphi)$.
Let $$\mathcal{S}(M, g, J):=\left\{\mbox{holomorphic d-scalar curvature functions of some}\
(M^n, \widehat{g}, J_\varphi)\right\}.$$
By the approach of Kazdan and Warner \cite{Kazdan-Warner2},
we can obtain the following results for prescribing holomorphic d-scalar curvature.
\begin{lem}\label{lem K-Z}
If there is a constant $c>0$ such that $\min\limits_M K<cS_J<\max\limits_M K$,
then $K\in\mathcal{S}(M, g, J)$.
\end{lem}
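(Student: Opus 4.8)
The plan is to reduce the statement, via Lemma \ref{S_J CE}, to a purely analytic existence question for a single semilinear equation, and then to solve that equation by the method of sub- and supersolutions. Writing $\psi=\varphi^{-1}$, realizing $K$ as the holomorphic d-scalar curvature of some $(M^n,\widehat g,J_\varphi)$ with $\widehat g=\varphi^*(u^{p-2}g)$ amounts, by Lemma \ref{S_J CE}, to finding a diffeomorphism $\psi$ and a smooth positive $u$ with
\begin{equation}\label{prescribe eq}
4\Delta_g u+S_Ju=(K\circ\psi)\,u^{p-1}.
\end{equation}
Two features of this formulation guide the proof. First, the constant function $u_0:=c^{-1/(p-2)}$ already solves \eqref{prescribe eq} with $K\circ\psi$ replaced by the fixed function $cS_J$, since $4\Delta_gu_0+S_Ju_0=S_Ju_0=cS_J\,u_0^{p-1}$; thus $cS_J$ is itself realizable, and the hypothesis places the range of this \emph{already realizable} function strictly inside $(\min_MK,\max_MK)$. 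Second, as $\psi$ ranges over the diffeomorphism group, $K\circ\psi$ ranges over the full orbit of $K$, so we are free to replace $K$ by any function with the same range and level-set structure up to diffeomorphism.

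First I would set up the sub/supersolution machinery for \eqref{prescribe eq}. Fixing $\psi$ for the moment and writing $F(u):=4\Delta_gu+S_Ju-(K\circ\psi)u^{p-1}$, it suffices to produce smooth functions $0<u_-\le u_+$ with $F(u_-)\le0\le F(u_+)$; a standard monotone iteration, after adding a large multiple $\lambda u$ to both sides to render $4\Delta_g+\lambda$ invertible and the right-hand side monotone on $[u_-,u_+]$, then yields a solution $u$ with $u_-\le u\le u_+$. I would stress that this scheme is insensitive to the critical value of the exponent $p=\tfrac{2n}{n-2}$, because it rests on the maximum principle and elliptic regularity rather than on any compact Sobolev embedding; this is what makes sub/supersolutions, rather than a direct variational argument, the natural tool here.

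The heart of the proof is the construction of the ordered pair $u_-\le u_+$, and this is where both the strict straddle inequalities and the freedom in $\psi$ enter. Starting from the baseline $u_0$, one has $F(u_0)=(cS_J-K\circ\psi)\,u_0^{p-1}$, which has the wrong sign precisely on the set where $K\circ\psi$ exceeds $cS_J$, that is, near the large values of $K$. The plan is to choose $\psi$ so as to redistribute these large values of $K$, and symmetrically the small ones, into a controlled region, and then to correct $u_0$ by solving an auxiliary linear equation for the operator $4\Delta_g+S_J$, suitably shifted, so as to push $F$ to the correct sign globally while preserving positivity and the ordering $u_-\le u_+$. The strict inequalities $\min_MK<cS_J<\max_MK$ should guarantee that the required corrections are of lower order and can be absorbed, so that a genuine subsolution from below and supersolution from above coexist.

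I expect the main obstacle to be exactly this barrier construction: since no sign is assumed for $S_J$, the linear operator $4\Delta_g+S_J$ need not obey a maximum principle nor be invertible, so the correction terms cannot be produced by a naive inversion, and one must use the diffeomorphism freedom together with the strictness of the straddle hypothesis to manufacture globally ordered barriers. Once the pair $u_-\le u_+$ is in hand, the monotone iteration and elliptic regularity deliver a smooth positive solution $u$ of \eqref{prescribe eq} for the chosen $\psi$; setting $\varphi=\psi^{-1}$ and $\widehat g=\varphi^*(u^{p-2}g)$, Lemma \ref{S_J CE} gives $\widehat{S_J}=K$, so $K\in\mathcal S(M,g,J)$, as required.
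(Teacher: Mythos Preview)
Your reduction via Lemma~\ref{S_J CE} to the equation $4\Delta_gu+S_Ju=(K\circ\psi)u^{p-1}$, and your observation that the constant $u_0=c^{-1/(p-2)}$ already solves it with right-hand side $cS_J$, are correct and match the paper's set-up. The divergence is in the analytic step. The paper does not attempt sub- and supersolutions; it simply invokes \cite[Lemma~6.1]{Kazdan-Warner2}, whose proof for the general operator $T(u)=u^{-a}(\alpha\Delta_gu+ku)$ combines the Kazdan--Warner approximation lemma (for any $h$ with $\min_MK<h<\max_MK$ and any $\varepsilon>0$ there is a diffeomorphism $\psi$ with $\|K\circ\psi-h\|_{L^q}<\varepsilon$) with a local-solvability argument based on the implicit function theorem: one chooses $\psi$ so that $K\circ\psi$ is $L^q$-close to $cS_J$, checks that the linearization at $u_0$ is an isomorphism between suitable Sobolev spaces (perturbing $c$ slightly if needed to avoid an eigenvalue), and then obtains a solution $u$ near $u_0$ by the inverse function theorem.

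Your proposal therefore has a genuine gap. You name the obstacle yourself: with no sign hypothesis on $S_J$ or on $K$, the operator $4\Delta_g+S_J$ enjoys no maximum principle and need not be invertible, so neither constants nor linear corrections of $u_0$ furnish ordered global barriers in any evident way. You do not actually carry out the barrier construction; you only assert that the diffeomorphism freedom together with the strict straddle ``should'' make the corrections absorbable. That is precisely where a sub/supersolution route stalls here, and it is why Kazdan--Warner use $L^q$-approximation plus the implicit function theorem instead: the latter requires only invertibility of a \emph{single} linear operator at the one point $u_0$ (which holds for generic $c$), not a global comparison principle. To complete your argument you would essentially have to reproduce their perturbation theorem, at which point the sub/supersolution scaffolding is no longer doing any work.
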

\begin{proof}
The proof of \cite[Lemma 6.1]{Kazdan-Warner2} actually deals with a general operator
$$T(u):=u^{-a}(\alpha\Delta_g u+ku),$$
where $a>1$, $\alpha>0$ and $k\in C^{\infty}(M)$
(In this paper, the sign of Laplacian operator is different from \cite{Kazdan-Warner2, Kazdan-Warner1}).
In fact, if there is a constant $c>0$ such that $\min\limits_M K<ck<\max\limits_M K$,
then there exist a positive function $u\in C^{\infty}(M)$ and a diffeomorphism $\varphi$ of $M$
such that $T(u)=K\circ\varphi^{-1}$.
By Lemma \ref{S_J CE}, we only need to take $a=p-1$, $\alpha=4$ and $k=S_J$.
\end{proof}

The first eigenvalue of the operator $L_g(u)=4\frac{n-1}{n-2}\Delta_gu +R_gu$
plays an important role in Kazdan and Warner's work \cite{Kazdan-Warner2, Kazdan-Warner1}.
Here, we need the first eigenvalue $\lambda_1(g, J)$ of the operator $L_{g, J}(u)=4\Delta_gu +S_Ju$.
By \cite[Theorem 2.11, Remark 2.12 and Theorem 3.2]{Kazdan-Warner1},
it is easy to see that the sign of $\lambda_1(g, J)$ is a conformal invariant.
Since $\lambda_1(\widetilde{g}, J)=\widetilde{S_{J}}$
for the conformal metric $\widetilde{g}$ such that $\widetilde{S_{J}}=Y(M, g, J)$,
we know the conformal invariant $Y(M, g, J)$ has the same sign with $\lambda_1(g, J)$.
So we can replace $\lambda_1(g, J)$ by $Y(M, g, J)$ in the statement of the following theorem.

\begin{thm}\label{thm phsc}
Let $(M^n, g, J)$ be a closed, connected almost Hermitian manifold, $n\geq6$.
\begin{enumerate}
\item[$(1)$] If $Y(M, g, J)<0$, then $\mathcal{S}(M, g, J)$ is precisely
the set of smooth functions that are negative somewhere on $M$.
\item[$(2)$] If $Y(M, g, J)=0$, then $\mathcal{S}(M, g, J)$ is precisely
the set of smooth functions that either change sign or are identically zero on $M$.
\item[$(3)$] If $Y(M, g, J)>0$, then $\mathcal{S}(M, g, J)$ is precisely
the set of smooth functions that are positive somewhere on $M$.
\end{enumerate}
\end{thm}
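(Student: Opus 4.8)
The plan is to follow the Kazdan--Warner trichotomy, exploiting that the diffeomorphism freedom built into $\mathcal{S}(M,g,J)$ has already been packaged into Lemma~\ref{lem K-Z}. First I would observe that $\mathcal{S}(M,g,J)$ is a conformal invariant of the base metric: if $g'=w^{p-2}g$, then every metric of the form $\varphi^*(v^{p-2}g')$ equals $\varphi^*((vw)^{p-2}g)$ with the same $J_\varphi$, so $\mathcal{S}(M,g',J)=\mathcal{S}(M,g,J)$, while $Y(M,g',J)=Y(M,g,J)$ since $Y$ is conformally invariant. Hence, applying Theorem~\ref{thm chsc}, I may assume from the outset that $S_J\equiv Y(M,g,J)=:c_0$ is constant. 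With this normalization the operator $L_{g,J}=4\Delta_g+S_J$ has the constant function $1$ as a positive ground state with eigenvalue $c_0$, which makes both directions transparent.

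For necessity, suppose $K\in\mathcal{S}(M,g,J)$, so by Lemma~\ref{S_J CE} there are a diffeomorphism $\varphi$ and a positive $u\in C^\infty(M)$ with $4\Delta_gu+S_Ju=(K\circ\varphi^{-1})u^{p-1}$. Integrating over $M$ and using $\int_M\Delta_gu\,dV_g=0$ together with $S_J\equiv c_0$ gives
\[
c_0\int_M u\,dV_g=\int_M (K\circ\varphi^{-1})\,u^{p-1}\,dV_g .
\]
Since $u>0$ and $K\circ\varphi^{-1}$ shares the sign behaviour and extrema of $K$, the sign of $c_0=Y(M,g,J)$ forces, respectively, that $K$ is negative somewhere when $Y<0$, that $K$ changes sign or vanishes identically when $Y=0$, and that $K$ is positive somewhere when $Y>0$. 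This yields the three necessary conditions at once.

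For sufficiency I would split each case into a non-constant part and a constant part. When $K$ is non-constant with the prescribed sign behaviour, I claim one can always find $c>0$ with $\min_M K<c\,c_0<\max_M K$ and invoke Lemma~\ref{lem K-Z}: indeed $c\mapsto c\,c_0$ sweeps out $(-\infty,0)$ if $c_0<0$, the single value $0$ if $c_0=0$, and $(0,\infty)$ if $c_0>0$, and in each case the target interval $(\min_M K,\max_M K)$ meets this range precisely under the stated hypothesis (for $Y=0$ this is exactly the requirement that $K$ change sign). The constant functions of the correct sign are realized by the homothety $\widehat{g}=tg$, $\varphi=\mathrm{id}$, under which $\widehat{S_J}=c_0/t$ ranges over all of $(-\infty,0)$ or $(0,\infty)$ as $t>0$ varies; and $K\equiv0$ in the case $Y=0$ is realized by $\widehat{g}=g$ itself. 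Matching these realizations against the three sign classes completes the sufficiency.

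The main obstacle is not the analysis---this is hidden inside the cited Kazdan--Warner Lemma~\ref{lem K-Z}---but the careful bookkeeping at the boundary of each case: Lemma~\ref{lem K-Z} requires the \emph{strict} double inequality $\min_M K<c\,S_J<\max_M K$, which silently excludes constant $K$ and, when $Y=0$, everything that does not genuinely change sign. I therefore expect the delicate point to be verifying that the homothety and identity constructions exactly fill these gaps, so that the union of ``straddling'' functions, nonzero constants of the right sign, and (for $Y=0$) the zero function reproduces precisely the three sign classes in the statement, with no function either missed or spuriously admitted.
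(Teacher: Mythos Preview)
Your proposal is correct and is precisely the approach the paper takes: the paper's entire proof is ``Combining Theorem~\ref{thm chsc} and Lemma~\ref{lem K-Z}, the argument of \cite[Theorem 6.2]{Kazdan-Warner2} is still valid,'' and you have faithfully unpacked that Kazdan--Warner argument (reduce to constant $S_J$ via Theorem~\ref{thm chsc} and conformal invariance of $\mathcal{S}(M,g,J)$, obtain necessity by integrating the equation against $dV_g$, and obtain sufficiency from Lemma~\ref{lem K-Z} for non-constant $K$ together with homothety for the residual constants). Your identification of the ``boundary bookkeeping'' as the only delicate point is accurate, and your handling of it is correct.
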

\begin{proof}
Combining Theorem \ref{thm chsc} and Lemma \ref{lem K-Z},
the argument of \cite[Theorem 6.2]{Kazdan-Warner2} is still valid.
\end{proof}

\begin{rem}
For the $\verb"*"$-scalar curvature, we can obtain similar results.
\end{rem}

\section{Relations to balanced metrics}\label{sec-balanced}
At first, we state the definition of conformally balanced metric in our terminology.
\begin{defn}$($See \cite[Definition 3.13]{Liu-Yang17}$)$.
Let $(M^n, g, J)$ be a Hermitian manifold.
The Riemannian metric $g$ is called a balanced metric if $\delta\omega=0$,
i.e., $(M^n, g, J)\in\mathcal{W}_3$.
$g$ is said to be conformally balanced if it is conformal to a balanced metric.
\end{defn}

Based on Gauduchon's work \cite{Gauduchon}, Fu and Zhou \cite{Fu-Zhou} showed the following result.
\begin{thm}$($See \cite[Theorem 2.1 and 2.3]{Fu-Zhou}$)$.\label{Int S_J}
Let $(M^n, g, J)$ be a closed almost Hermitian manifold.
\begin{enumerate}
\item[$(1)$] If $(M^n, g, J)\in\mathcal{W}_1\oplus\mathcal{W}_3\oplus\mathcal{W}_4$,
then $\int_M S_J\ dV_g\geq0$.
The equality holds if and only if $(M^n, g, J)$ is a balanced manifold.
\item[$(2)$] If $(M^n, g, J)\in\mathcal{W}_2\oplus\mathcal{W}_3$,
then $\int_M S_J\ dV_g\leq0$.
The equality holds if and only if $(M^n, g, J)$ is a balanced manifold.
\end{enumerate}
\end{thm}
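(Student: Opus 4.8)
The plan is to reduce the global sign of $\int_M S_J\,dV_g$ to an $L^2$-decomposition of $\nabla\omega$ along the Gray--Hervella factors, using the Bochner--Weitzenb\"ock formula for the fundamental $2$-form $\omega$. On any closed Riemannian manifold one has the pointwise identity $\Delta_H\omega=\nabla^*\nabla\omega+q(R)\omega$, where $\Delta_H=d\delta+\delta d$ is the Hodge Laplacian and $q(R)$ is the Weitzenb\"ock curvature operator on $2$-forms. Pairing with $\omega$ and integrating, the left side becomes $\int_M(|d\omega|^2+|\delta\omega|^2)\,dV_g$ after one integration by parts (this is where closedness of $M$ enters), while the rough Laplacian contributes $\int_M|\nabla\omega|^2\,dV_g$. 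Everything then hinges on the purely algebraic term $\langle q(R)\omega,\omega\rangle$.

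First I would evaluate $\langle q(R)\omega,\omega\rangle$ pointwise. Writing $q(R)$ on $2$-forms as a Ricci contraction plus a full-curvature contraction, and using $\sum_b\omega(e_c,e_b)\omega(e_a,e_b)=g(Je_c,Je_a)=g(e_a,e_c)$ together with $J^2=-\mathrm{Id}$, the Ricci part collapses to a multiple of $R_g$, while the curvature part, after contracting two slots against $J$, becomes a multiple of $R_g^*=\sum_{i,j}R(e_j,e_i,Je_j,Je_i)$. The outcome is $\langle q(R)\omega,\omega\rangle=2(R_g-R_g^*)=2S_J$, giving the master identity
$$2\int_M S_J\,dV_g=\int_M\left(|d\omega|^2+|\delta\omega|^2-|\nabla\omega|^2\right)dV_g.$$
The sign must be tracked with care, since the curvature convention $R(X,Y)=\nabla_{[X,Y]}-[\nabla_X,\nabla_Y]$ used here is opposite to the usual one; I would fix it by testing against the K\"ahler case, where $\nabla\omega=0$ forces both sides to vanish, and against the nearly K\"ahler case, where $\delta\omega=0$ and $|d\omega|^2=3|\nabla\omega|^2$ recover the known relation $S_J=|\nabla\omega|^2$.

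Next I would insert the decomposition $\nabla\omega=W_1+W_2+W_3+W_4$ and the associated orthogonal norm identities. One has $|\nabla\omega|^2=|W_1|^2+|W_2|^2+|W_3|^2+|W_4|^2$; moreover $|d\omega|^2$ receives contributions only from $W_1,W_3,W_4$ (the almost K\"ahler factor $W_2$ satisfies $d\omega=0$, so it drops out), and $|\delta\omega|^2$ comes only from the Lee-form factor $W_4$. The crucial algebraic facts are that the $W_3$-contributions to $|d\omega|^2$ and to $|\nabla\omega|^2$ cancel in the difference, while the $W_1$- and $W_4$-coefficients of $|d\omega|^2+|\delta\omega|^2$ strictly exceed those of $|\nabla\omega|^2$. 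Feeding these in, the integrand becomes a diagonal quadratic form
$$|d\omega|^2+|\delta\omega|^2-|\nabla\omega|^2=a_1|W_1|^2-|W_2|^2+a_4|W_4|^2$$
with $a_1,a_4>0$ and no $W_3$-term.

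The theorem then follows by restriction. If $(M^n,g,J)\in\mathcal{W}_1\oplus\mathcal{W}_3\oplus\mathcal{W}_4$, then $W_2=0$, the integrand is $a_1|W_1|^2+a_4|W_4|^2\geq0$, and $\int_M S_J\geq0$; equality forces $W_1\equiv W_4\equiv0$, leaving only $W_3$, which by $N_J=0\iff W_1=W_2=0$ and $\delta\omega=0\iff W_4=0$ is exactly the balanced condition. If $(M^n,g,J)\in\mathcal{W}_2\oplus\mathcal{W}_3$, then $W_1=W_4=0$, the integrand reduces to $-|W_2|^2\leq0$, and $\int_M S_J\leq0$; equality forces $W_2\equiv0$, again exactly balanced. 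I expect the main obstacle to be the two algebraic inputs behind the displayed formulas: the precise evaluation $\langle q(R)\omega,\omega\rangle=2S_J$ in this paper's sign convention, and the exact Gray--Hervella norm coefficients---in particular verifying that $W_2$ contributes nothing to $d\omega$ and that the $W_3$-contributions cancel. These are representation-theoretic computations in the $U(n/2)$-decomposition of the space $W$, and pinning down the constants (not merely their signs) is the crux.
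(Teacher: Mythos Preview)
The paper does not give its own proof of this statement: it is quoted verbatim from Fu--Zhou (with the remark that similar results appear in Hern\'andez-Lamoneda) and then used as a black box in the proof of Theorem~\ref{thm-balanced}. So there is no in-paper argument to compare your proposal against.

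That said, your route is the classical one and is essentially what the cited references do. The Weitzenb\"ock identity for $\omega$, together with the fact that $|\omega|_g^2$ is pointwise constant on an almost Hermitian manifold (so that $\langle\nabla^*\nabla\omega,\omega\rangle=|\nabla\omega|^2$ already holds pointwise), yields exactly the integrated identity you wrote down, and the Gray--Hervella decomposition of $\nabla\omega$ then diagonalizes the integrand with the signs you indicate: positive on $W_1$, negative on $W_2$, vanishing on $W_3$, and nonnegative on $W_4$. Two small comments. First, in case~(2) the cited source actually proves the stronger \emph{pointwise} inequality $S_J\le 0$ on $\mathcal{W}_2\oplus\mathcal{W}_3$ (the present paper invokes this in the proof of Theorem~\ref{thm-balanced}(2)); this comes from a pointwise curvature identity rather than from the integrated Weitzenb\"ock alone, so your integrated argument, while sufficient for the theorem as stated, is slightly weaker than what Fu--Zhou obtain. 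Second, your closing worry is well placed: the entire content of the proof is in the two algebraic computations (the identification $\langle q(R)\omega,\omega\rangle=2S_J$ in this sign convention, and the exact Gray--Hervella norm coefficients), and you should in particular check the $W_4$-coefficient carefully, since it is dimension-dependent; your claim ``$a_4>0$'' is what makes the equality case in~(1) force $W_4\equiv 0$, and that step deserves an explicit computation rather than an assertion.
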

Similar results were also found in \cite{Hern}.
Combining this with Theorem \ref{thm chsc},
we have the following properties for the conformal invariant $Y(M, g, J)$.

\begin{thm}\label{thm-balanced}
Let $(M^n, g, J)$ be a closed, connected almost Hermitian manifold, $n\geq6$.
\begin{enumerate}
\item[$(1)$] If $(M^n, g, J)\in\mathcal{W}_1\oplus\mathcal{W}_3\oplus\mathcal{W}_4$,
then $Y(M, g, J)\geq0$. In addition, $Y(M, g, J)=0$
if and only if $J$ is integrable and $g$ is conformally balanced.
\item[$(2)$] If $(M^n, g, J)\in\mathcal{W}_2\oplus\mathcal{W}_3$,
then $Y(M, g, J)\leq0$. In addition, $Y(M, g, J)=0$
if and only if $(M^n, g, J)$ is balanced.
\end{enumerate}
\end{thm}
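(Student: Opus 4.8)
The plan is to reduce both statements to the Fu--Zhou integral inequalities (Theorem \ref{Int S_J}) by noting that the \emph{constant} test function already detects the sign of the invariant. Substituting $u\equiv 1$ into (\ref{QgJ}) gives
\[
Q_{g,J}(1)=\frac{\int_M S_J\,dV_g}{\mathrm{Vol}(M,g)^{2/p}},
\]
so the sign of $Q_{g,J}(1)$ agrees with that of $\int_M S_J\,dV_g$, and since $Y(M,g,J)=\inf_{u>0}Q_{g,J}(u)\le Q_{g,J}(1)$ this controls $Y(M,g,J)$ from above. The two parts are then genuinely asymmetric: for (2) this upper bound already points in the desired direction, whereas for (1) the hard inequality points the other way and the constant function carries no information, so a conformal normalization is needed.

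For part (2), the hypothesis $(M^n,g,J)\in\mathcal{W}_2\oplus\mathcal{W}_3$ gives $\int_M S_J\,dV_g\le 0$ by Theorem \ref{Int S_J}(2), hence $Y(M,g,J)\le Q_{g,J}(1)\le 0$. For the equality assertion, if $Y(M,g,J)=0$ then the chain $0=Y(M,g,J)\le Q_{g,J}(1)\le 0$ forces $\int_M S_J\,dV_g=0$, and the equality case of Theorem \ref{Int S_J}(2) makes $(M^n,g,J)$ balanced; conversely, a balanced manifold has $\int_M S_J\,dV_g=0$, so $Q_{g,J}(1)=0$ and $Y(M,g,J)\le 0$, while it lies in $\mathcal{W}_3\subseteq\mathcal{W}_1\oplus\mathcal{W}_3\oplus\mathcal{W}_4$ and so $Y(M,g,J)\ge 0$ by part (1), giving $Y(M,g,J)=0$. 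I emphasize that this argument stays at the metric $g$ and never perturbs it, which is essential since $\mathcal{W}_2\oplus\mathcal{W}_3$ need not be conformally invariant.

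For part (1), the inequality $Y(M,g,J)\ge 0$ is the substantive point, since $\inf_{u>0}Q_{g,J}$ may lie strictly below $Q_{g,J}(1)\ge 0$. Here I would invoke Theorem \ref{thm chsc} (using $n\ge 6$) to pass to a conformal metric $\widetilde g=u^{p-2}g$ with $\widetilde{S_J}\equiv Y(M,g,J)$ constant, and then apply Theorem \ref{Int S_J}(1) \emph{at} $\widetilde g$: constancy of $\widetilde{S_J}$ turns $\int_M \widetilde{S_J}\,dV_{\widetilde g}\ge 0$ into $Y(M,g,J)\ge 0$. For equality, $Y(M,g,J)=0$ forces $\widetilde{S_J}\equiv 0$, so $(M^n,\widetilde g,J)$ is balanced by the equality case of Theorem \ref{Int S_J}(1); thus $J$ is integrable and $g$, being conformal to the balanced metric $\widetilde g$, is conformally balanced. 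Conversely, if $J$ is integrable and $g$ is conformally balanced, then $g$ is conformal to a balanced $g_0$ with $\int_M S_J\,dV_{g_0}=0$, whence $Y(M,g_0,J)\le 0$; together with the inequality just proved this yields $Y(M,g_0,J)=0$, and the conformal invariance of $Y(M,\cdot,J)$ transfers this to $g$.

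The one fact that must be secured for part (1), and the main obstacle, is that applying Theorem \ref{Int S_J}(1) at $\widetilde g$ requires $(M^n,\widetilde g,J)$ to again lie in $\mathcal{W}_1\oplus\mathcal{W}_3\oplus\mathcal{W}_4$; that is, I must know the Gray--Hervella class $\mathcal{G}_1=\mathcal{W}_1\oplus\mathcal{W}_3\oplus\mathcal{W}_4$ is invariant under conformal change of metric. This is exactly where the two hypotheses differ: under a conformal deformation only the $\mathcal{W}_4$ (Lee-form) component of $\nabla\omega$ is altered, so precisely the classes containing $\mathcal{W}_4$ as a summand are conformally stable, $\mathcal{G}_1$ among them but $\mathcal{W}_2\oplus\mathcal{W}_3$ not. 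I would either cite Gray--Hervella \cite{Gray-Hervella} for this invariance or verify directly that the defining identity $\nabla_X\omega(X,Y)-\nabla_{JX}\omega(JX,Y)=0$ is preserved under $g\mapsto e^{2f}g$; once this is in hand, the deformation argument for part (1) closes.
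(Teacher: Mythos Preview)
Your proof is correct and follows essentially the same route as the paper: conformal invariance of $\mathcal{G}_1=\mathcal{W}_1\oplus\mathcal{W}_3\oplus\mathcal{W}_4$ (cited from Gray--Hervella) together with Theorem~\ref{Int S_J} and Theorem~\ref{thm chsc} for part~(1), and the test function $u\equiv 1$ plus Theorem~\ref{Int S_J}(2) (with part~(1) for the converse) for part~(2). The only cosmetic difference is that for the bare inequality $Y(M,g,J)\ge 0$ in (1) the paper observes directly that conformal invariance of $\mathcal{G}_1$ gives $Q_{g,J}(u)\ge 0$ for \emph{every} $u>0$, so Theorem~\ref{thm chsc} is only invoked for the equality case, whereas you route the inequality through the constant-curvature representative as well; both arguments are valid.
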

\begin{proof}
To prove $(1)$, we point out that the class $\mathcal{W}_1\oplus\mathcal{W}_3\oplus\mathcal{W}_4$ is conformally invariant \cite[Theorem 4.2]{Gray-Hervella}.
Thus Theorem \ref{Int S_J} $(1)$ implies $Y(M, g, J)\geq0$.
If $Y(M, g, J)=0$, then by Theorem \ref{thm chsc}, there exists a conformal metric $\widetilde{g}$
with constant holomorphic d-scalar curvature $\widetilde{S_J}$=0.
Hence Theorem \ref{Int S_J} $(1)$ implies 
that $J$ is integrable and $g$ is conformally balanced.
Conversely, if $J$ is integrable and $g$ is conformally balanced,
then there exists a conformal metric $\widetilde{g}$
such that $(M^n, \widetilde{g}, J)$ is balanced.
Theorem \ref{Int S_J} $(1)$ implies that $\int_M \widetilde{S_J}\ dV_{\widetilde{g}}=0$.
Then we have $Y(M, g, J)\leq0$, and hence $Y(M, g, J)=0$.

Next we prove $(2)$. If $(M^n, g, J)\in\mathcal{W}_2\oplus\mathcal{W}_3$,
then $S_J\leq0$ (see the proof of \cite[Theorem 2.3]{Fu-Zhou}), and thus $Y(M, g, J)\leq0$.
Theorem \ref{Int S_J} $(2)$ implies that $(M^n, g, J)$ is balanced if $Y(M, g, J)=0$.
Conversely, if $(M^n, g, J)$ is balanced, then $(1)$ implies that $Y(M, g, J)\geq0$,
and hence $Y(M, g, J)=0$.
\end{proof}

\begin{cor}
Let $(M^n, g, J)$ be a closed, connected almost Hermitian manifold, $n\geq6$.
If $Y(M, g, J)<0$, then $J$ is not integrable.
\end{cor}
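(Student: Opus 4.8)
The plan is to reduce this to the rigidity statement already established in Theorem~\ref{thm-balanced}. The key observation is that integrability of $J$ is exactly the condition $N_J=0$, which by the Gray--Hervella table characterizes the Hermitian class $\mathcal{W}_3\oplus\mathcal{W}_4$. So I would argue by contraposition: assume $J$ \emph{is} integrable and show that this forces $Y(M, g, J)\geq 0$, contradicting the hypothesis $Y(M, g, J)<0$.

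First I would note the elementary inclusion of Gray--Hervella classes
$$\mathcal{W}_3\oplus\mathcal{W}_4\subseteq\mathcal{W}_1\oplus\mathcal{W}_3\oplus\mathcal{W}_4.$$
If $J$ is integrable, then $(M^n, g, J)\in\mathcal{W}_3\oplus\mathcal{W}_4$, and hence $(M^n, g, J)$ lies in $\mathcal{W}_1\oplus\mathcal{W}_3\oplus\mathcal{W}_4=\mathcal{G}_1$. This places us squarely in the hypothesis of Theorem~\ref{thm-balanced}~(1), which immediately yields $Y(M, g, J)\geq 0$. This contradicts the assumption $Y(M, g, J)<0$, so $J$ cannot be integrable. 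That is the entire argument.

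I do not expect a genuine obstacle here, since both the class inclusion and Theorem~\ref{thm-balanced}~(1) are already in hand; the corollary is essentially a direct specialization. The only point that deserves a moment's care is making the class membership explicit: one must confirm that the \emph{integrability} condition $N_J=0$ really corresponds to membership in $\mathcal{W}_3\oplus\mathcal{W}_4$ as listed in the table, rather than to some strictly smaller class. Once that identification is in place, the chain of inclusions $\{N_J=0\}=\mathcal{W}_3\oplus\mathcal{W}_4\subseteq\mathcal{W}_1\oplus\mathcal{W}_3\oplus\mathcal{W}_4$ is automatic, and Theorem~\ref{thm-balanced}~(1) does all the remaining work. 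Thus the corollary follows at once by contraposition.
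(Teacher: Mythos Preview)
Your proposal is correct and follows essentially the same approach as the paper's own proof: assume $J$ is integrable, use the Gray--Hervella identification $\{N_J=0\}=\mathcal{W}_3\oplus\mathcal{W}_4\subseteq\mathcal{W}_1\oplus\mathcal{W}_3\oplus\mathcal{W}_4$, and invoke Theorem~\ref{thm-balanced}~(1) to obtain $Y(M,g,J)\geq0$, a contradiction. The paper's proof is simply a more terse version of exactly this argument.
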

\begin{proof}
If $J$ is integrable, then $(M^n, g, J)\in \mathcal{W}_3\oplus\mathcal{W}_4$ and thus $Y(M, g, J)\geq0$, 
which leads to a contradiction.
\end{proof}
Fu et al. \cite{Fu-Li-Yau} constructed balanced metrics on some non-K\"{a}hler Calabi-Yau threefolds $(M^6, g, J)$ including $\sharp_k S^3\times S^3$ ($k\geq2$). By Theorem \ref{thm-balanced} (1) and Theorem \ref{thm chsc}, these Hermitian manifolds have $Y(M^6, g, J)=0$ and zero holomorphic d-scalar curvature under some conformal metric, the same as all K\"{a}hler manifolds do.

\section{A variation of $Y(M, g, J)$}\label{sec-variation}
For every compatible almost complex structure $J$ of $(\mathbb{S}^6, \mathring{g})$,
direct calculations imply that
$(\mathbb{S}^6, \mathring{g}, J)$ has constant holomorphic d-scalar curvature $24$ and
$Y(\mathbb{S}^6, \mathring{g}, J)= 24\omega_6^{\frac13}$.
Motivated by this phenomenon, we consider the variation of $Y(M, g, J)$ with respect to $J$.
We mainly use the technology of Wang and Zheng \cite{Wang-Zheng}:

\begin{thm}$($See \cite[Theorem 1.1]{Wang-Zheng}$)$.\label{lem der}
Let $f(t)$ be a continuous function on an interval $I\subset\mathbb{R}$.
Suppose that for any $t\in I$, there exists a $C^1$ function $F(t, s)$ of $s$
defined on a neighborhood of $t$ such that $F(t, t)=f(t)$ and $F(t, s)\geq f(s)$.
\begin{enumerate}
\item[$(1)$] If $\frac{\partial F}{\partial s}(t, t)$ is locally bounded,
then $f(t)$ is locally Lipschitz. 
\item[$(2)$] For any $t$ in the interior of $I$, if $f(t)$ is differentiable at $t$,
then $$f'(t)=\frac{\partial F}{\partial s}(t, t).$$
\end{enumerate}
\end{thm}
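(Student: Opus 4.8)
The plan is to read Theorem~\ref{lem der} as a purely one-variable real-analysis statement about the ``lower-envelope'' inequality $f(s)\le F(t,s)$, with equality at $s=t$, and to extract everything from one-sided comparisons with the single supporting function $F(t,\cdot)$ together with the classical monotonicity theorem for Dini derivatives. The guiding observation is that, for each fixed $t$, the inequality $f(s)\le F(t,s)$ with $F(t,t)=f(t)$ pins the one-sided difference quotients of $f$ at $t$ to the slope $\partial_s F(t,t)$, which is exactly the quantity in the conclusion. In the intended application one takes $f(t)=Y(M,g,J_t)$ along a path of compatible almost complex structures and $F(t,s)=Q_{g,J_s}(u_t)$ for a minimizer $u_t$ realizing $Y(M,g,J_t)$ (which exists by Theorem~\ref{thm chsc}); then $F(t,t)=f(t)$ and $F(t,s)\ge f(s)$ are immediate from the definition of the infimum, so verifying the hypotheses costs nothing.

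First I would record the two basic one-sided estimates. Fix $t$ and write $F_t:=F(t,\cdot)$, a $C^1$ function near $t$ with $F_t(t)=f(t)$ and $F_t\ge f$. For $h>0$, dividing $f(t+h)-f(t)\le F_t(t+h)-F_t(t)$ by $h$ and letting $h\to0^+$ bounds the upper right Dini derivative: $D^+f(t)\le F_t'(t)=\partial_s F(t,t)$. For $h<0$ the same inequality divided by $h$ reverses, and letting $h\to0^-$ bounds the lower left Dini derivative: $D_-f(t)\ge\partial_s F(t,t)$. Crucially, both estimates evaluate $\partial_s F$ only on the diagonal, so no joint regularity of $F$ is used.

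Part~(2) is then immediate: if $f$ is differentiable at an interior point $t$, all four Dini derivatives equal $f'(t)$, so $f'(t)=D^+f(t)\le\partial_s F(t,t)$ and $f'(t)=D_-f(t)\ge\partial_s F(t,t)$, forcing $f'(t)=\partial_s F(t,t)$. For Part~(1) I would fix an interior $t_0$ and a neighborhood $U$ on which $|\partial_s F(t,t)|\le M$, so that on $U$ the two estimates read $D^+f\le M$ and $D_-f\ge-M$. I would then invoke the classical fact that a continuous function whose upper right Dini derivative is everywhere $\le0$ is non-increasing: applying it to $f(t)-Mt$ gives $f(t_2)-f(t_1)\le M(t_2-t_1)$ for $t_1<t_2$. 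For the matching lower bound I would pass to the reflected function $\check f(t):=f(-t)$, for which a short computation gives $D^+\check f(t)=-D_-f(-t)\le M$; the same monotonicity fact applied to $\check f(t)-Mt$ yields $f(t_2)-f(t_1)\ge-M(t_2-t_1)$. Combining the two bounds shows that $f$ is Lipschitz with constant $M$ on $U$, proving local Lipschitz continuity.

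The subtle point, and the reason I would route the argument through Dini derivatives rather than the mean value theorem, is Part~(1). A naive proof would write $f(t_2)-f(t_1)\le F(t_1,t_2)-F(t_1,t_1)=\partial_s F(t_1,\xi)(t_2-t_1)$ and then ask for $|\partial_s F(t_1,\xi)|\le M$; but $\xi$ lies strictly between $t_1$ and $t_2$, \emph{off} the diagonal, so this would require joint continuity (or a uniform neighborhood bound) of $\partial_s F$, which is not among the hypotheses. The Dini-monotonicity route avoids this entirely, since the one-sided estimates only ever sample $\partial_s F$ on the diagonal. I would therefore expect the verification of the reflection identity $D^+\check f(t)=-D_-f(-t)$, and the bookkeeping of which Dini derivative is controlled on which side, to be the only places needing genuine care; the rest is a direct application of the supporting inequality.
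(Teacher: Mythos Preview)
The paper does not prove Theorem~\ref{lem der}: it is quoted verbatim from \cite[Theorem~1.1]{Wang-Zheng} and used as a black box in Section~\ref{sec-variation}. So there is no in-paper proof to compare your proposal against.

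That said, your argument is correct and self-contained. The key inequalities $D^+f(t)\le\partial_sF(t,t)$ and $D_-f(t)\ge\partial_sF(t,t)$ follow exactly as you say from the supporting inequality $f(s)\le F(t,s)$ with equality at $s=t$, and Part~(2) is then immediate. For Part~(1), your route through the Dini monotonicity theorem (a continuous function with $D^+g\le 0$ is non-increasing) is the right one, and your observation in the last paragraph is precisely the point: the hypothesis only bounds $\partial_sF$ \emph{on the diagonal}, so a mean-value-theorem approach that lands at an off-diagonal $\xi$ would be using more than is assumed. Your reflection trick and the identity $D^+\check f(t)=-D_-f(-t)$ are standard and correctly stated. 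The only cosmetic remark is that the theorem allows $t$ on the boundary of $I$ in Part~(1), where one of the two one-sided estimates may be vacuous; but ``locally Lipschitz'' on an interval is a local statement and your argument gives the needed one-sided bound at endpoints, so nothing breaks.
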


Let $(M^n, g, J)$ be a closed, connected almost Hermitian manifold, $n\geq6$.
By Theorem \ref{thm chsc} in Section \ref{sec-chsc}, we know that the set
$$C^Y(M, g, J):=\{u\in C^\infty(M): Q_{g, J}(u)=Y(M, g, J),\ \|u\|_p=1\}$$
is always nonempty.
Let $\mathcal{J}_g(M)$ denote the set of compatible almost complex structures of $(M, g)$,
and let $J(t)$ be a smooth one-parameter family in $\mathcal{J}_g(M)$, $t\in(-\varepsilon, \varepsilon)$.
Then for each $t$, there exists $u(t)\in C^Y(M, g, J(t))$ such that
$Q_{g, J(t)}(u(t))=Y(M, g, J(t))$.
For any $t, s\in(-\varepsilon, \varepsilon)$, let
\begin{equation}\label{fandF}
f(t):=Y(M, g, J(t))\ \mbox{and}\ F(t, s):=Q_{g, J(s)}(u(t)).
\end{equation}
Then we have $$F(t, t)=f(t)\ \mbox{and}\ F(t, s)\geq f(s).$$
It is easy to see that $F$ is $C^{\infty}$ on $s$, and the continuity of $f$ follows from the next lemma.

\begin{lem}\label{lem Y con}
The functional $$Y(M, g, \cdot): \mathcal{J}_g(M)\rightarrow\mathbb{R},\ J\mapsto Y(M, g, J)$$ is continuous, where $\mathcal{J}_g(M)$ is equipped with the $C^0$ topology.
\end{lem}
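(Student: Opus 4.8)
The plan is to reduce everything to the observation that the entire $J$-dependence of the functional $Q_{g, J}$ is carried by the single function $S_J=R_g-R_g^*$, and that this function depends on $J$ only \emph{algebraically}. First I would note that in the defining formula for $\operatorname{Ric}^*$, hence for $R_g^*$, the almost complex structure $J$ enters only through pointwise contractions with the curvature tensor $R$ of $(M^n,g)$; no covariant derivatives of $J$ appear. Writing $R_g^*=\sum_{i,j}R(e_j,e_i,Je_j,Je_i)$ in a local $g$-orthonormal frame makes this explicit: $R_g^*$ is a fixed quadratic expression in the entries of $J$ whose coefficients are the (fixed, $J$-independent) curvature tensor, while $R_g$ does not involve $J$ at all. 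Consequently, for two compatible almost complex structures $J,J'$, subtracting and using the compatibility bound $|J|_g,|J'|_g\le\sqrt{n}$ together with the boundedness of $R$ on the compact manifold $M$, one obtains an estimate of the form
$$\|S_{J'}-S_J\|_{C^0}=\|R_{g,J'}^*-R_{g,J}^*\|_{C^0}\le C(M,g)\,\|J'-J\|_{C^0}.$$

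Next I would turn this uniform control of $S_J$ into uniform control of the functional $Q_{g,J}$, uniformly over all admissible test functions. The key ingredient is the elementary H\"older bound
$$\frac{\int_M u^2\,dV_g}{\|u\|_p^2}\le\operatorname{Vol}(M,g)^{1-\frac2p}=:V_0,$$
valid for every $u\in H^1(M)\setminus\{0\}$, where $V_0$ is a constant depending only on $(M^n,g)$. Since $Q_{g,J'}(u)-Q_{g,J}(u)=\|u\|_p^{-2}\int_M(S_{J'}-S_J)u^2\,dV_g$, this yields
$$\sup_{u\neq0}\bigl|Q_{g,J'}(u)-Q_{g,J}(u)\bigr|\le V_0\,\|S_{J'}-S_J\|_{C^0},$$
that is, the two functionals differ by at most a constant, \emph{uniformly} over the whole admissible set.

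Finally I would pass to the infima. Since $Q_{g,J'}(u)\le Q_{g,J}(u)+V_0\|S_{J'}-S_J\|_{C^0}$ for every $u$, taking the infimum over $u$ gives $Y(M,g,J')\le Y(M,g,J)+V_0\|S_{J'}-S_J\|_{C^0}$, and by symmetry
$$\bigl|Y(M,g,J')-Y(M,g,J)\bigr|\le V_0\,\|S_{J'}-S_J\|_{C^0}\le C(M,g)\,V_0\,\|J'-J\|_{C^0}.$$
Thus $Y(M,g,\cdot)$ is in fact locally Lipschitz with respect to the $C^0$ topology, which is more than the asserted continuity. I do not expect a serious obstacle here; the only point requiring care is the verification that $R_g^*$ involves no derivatives of $J$, so that $C^0$-closeness of almost complex structures already forces $C^0$-closeness of the curvature functions $S_J$. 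Once this algebraic fact is in place, the H\"older bound is exactly what makes the dependence of the infimum $Y(M,g,J)$ on $J$ uniformly Lipschitz, without any need to control the minimizers $u(t)$.
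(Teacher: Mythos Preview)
Your proposal is correct and follows essentially the same approach as the paper: both use the H\"older bound $\int_M u^2\,dV_g\le\operatorname{Vol}(M,g)^{2/n}\|u\|_p^2$ to control $|Q_{g,J'}(u)-Q_{g,J}(u)|$ uniformly in $u$, combined with the pointwise estimate $|S_{J'}-S_J|\le C|R|_g\,|J'-J|_g$ coming from the fact that $R_g^*$ is quadratic in $J$ with curvature coefficients. The paper makes the constant explicit ($C=2n$) via the identity $R_g^*=\sum R_{ijkl}J_i^kJ_j^l$, but otherwise the arguments are the same, and your observation that the resulting bound is actually Lipschitz is already implicit in the paper's inequality.
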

\begin{proof}
For any $u\in C^\infty(M)$ with $\|u\|_p=1$, the H\"{o}lder inequality implies that
$$\int_M u^2\ dV_g\leq\|u^2\|_{\frac{p}{2}}\|1\|_{\frac{n}{2}}=\(\operatorname{Vol}(M, g)\)^{\frac2n}.$$
Then for any $J_0, J\in\mathcal{J}_g(M)$,
$$\begin{aligned}
\left|Q_{g, J_0}(u)-Q_{g, J}(u)\right|&=\left|\int_M \(S_{J_0}-S_J\)u^2\ dV_g\right|
\leq\|S_{J_0}-S_J\|_{C^0}\int_M u^2\ dV_g\\
&\leq 2n\|J_0-J\|_{C^0, g}\|R\|_{C^0, g}\(\operatorname{Vol}(M, g)\)^{\frac2n},
\end{aligned}$$
where the last inequality follows from that for any local orthonormal frame,
$$\begin{aligned}
|S_{J_0}-S_J|&=\left|\sum_{i,j,k,l=1}^{n}\(J_i^kJ_j^l-(J_0)_i^k(J_0)_j^l\)R_{ijkl}\right|
\leq\sum_{i,j,k,l=1}^{n}\left|(J_0)_i^k(J_0)_j^l-J_i^kJ_j^l\right|\left|R_{ijkl}\right|\\
&=\sum_{i,j,k,l=1}^{n}\left|(J_0)_i^k\((J_0)_j^l-J_j^l\)+\((J_0)_i^k-J_i^k\)J_j^l\right|\left|R_{ijkl}\right|\\
&\leq\sum_{i,j,k,l=1}^{n}\(\left|(J_0)_j^l-J_j^l\right|+\left|(J_0)_i^k-J_i^k\right|\)\left|R_{ijkl}\right|\\
&=2\sum_{i,j,k,l=1}^{n}\left|(J_0)_i^k-J_i^k\right|\left|R_{ijkl}\right|
\leq 2n\left|J_0-J\right|_g\left|R\right|_g.
\end{aligned}$$
Thus for each $\varepsilon>0$, there exists $\delta=\delta(\varepsilon, n, g)>0$
such that for any $J_0, J\in\mathcal{J}_g(M)$ with $\|J_0-J\|_{C^0, g}<\delta$,
$$Q_{g, J_0}(u)-\varepsilon<Q_{g, J}(u)<Q_{g, J_0}(u)+\varepsilon.$$
Note that $Y(M, g, J)=Q_{g, J}(u)$ for some $u\in C^Y(M, g, J)$.
Hence we have $$Y(M, g, J_0)-\varepsilon< Y(M, g, J)< Y(M, g, J_0)+\varepsilon.$$
This implies the continuity of $Y(M, g, \cdot)$.
\end{proof}

To simplify the next formula,
we introduce the $J$-Ricci form \cite{Fu-Zhou, Simanca}
$$\rho^{J}(X, Y):=-\operatorname{Ric}^*(X, JY).$$
It is easy to verify that
$\rho^{J}(Y, X)=-\operatorname{Ric}^*(Y, JX)=\operatorname{Ric}^*(X, JY)=-\rho^{J}(X, Y)$,
i.e., $\rho^{J}$ is a smooth 2-form.

\begin{prop}
Let $(M^n, g, J)$ be a closed, connected almost Hermitian manifold, $n\geq6$.
Let $J(t)$ be a smooth one-parameter family in $\mathcal{J}_g(M)$ such that $J(0)=J$,
and let $u(t)\in C^Y(M, g, J(t))$, $t\in(-\varepsilon, \varepsilon)$.
Then $$\frac{\partial Y(M, g, J(t))}{\partial t}(t)\overset{a.e.}{=}-2\int_M
\<(J')^\flat(t), \rho^{J(t)}\>_g u^2(t)\ dV_g,$$
where $(J')^\flat$ is obtained from $J'$ by lowering an index.
\end{prop}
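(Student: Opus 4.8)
The plan is to apply the Wang--Zheng differentiation lemma (Theorem \ref{lem der}) to the functions $f$ and $F$ defined in \eqref{fandF}. Most of the hypotheses are already recorded in the paragraph preceding the statement: by construction $F(t,t)=f(t)$, while $F(t,s)=Q_{g,J(s)}(u(t))\geq Y(M,g,J(s))=f(s)$ because $u(t)>0$ is admissible for the infimum defining $Y(M,g,J(s))$; moreover $F$ is smooth in $s$, and $f$ is continuous by Lemma \ref{lem Y con}. Hence the two analytic tasks that remain are to compute $\frac{\partial F}{\partial s}(t,t)$ and to verify that it is locally bounded in $t$. Granting these, part (1) of Theorem \ref{lem der} shows that $f$ is locally Lipschitz, so $f$ is differentiable almost everywhere by Rademacher's theorem, and part (2) gives $f'(t)=\frac{\partial F}{\partial s}(t,t)$ at every point of differentiability; this is precisely the ``$\overset{a.e.}{=}$'' assertion.

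First I would evaluate $\frac{\partial F}{\partial s}$. Since $u(t)$ does not depend on $s$ and satisfies $\|u(t)\|_p=1$, the denominator of $Q_{g,J(s)}(u(t))$ is constant in $s$ and the Dirichlet term $\int_M 4|du(t)|^2\,dV_g$ is $s$-independent, so only the zeroth-order term varies:
$$\frac{\partial F}{\partial s}(t,s)=\int_M \frac{\partial S_{J(s)}}{\partial s}\,u^2(t)\,dV_g.$$
Everything therefore reduces to the first variation of the holomorphic d-scalar curvature with respect to the almost complex structure, which is the computational core of the proof.

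For that variation I would use $S_J=R_g-R_g^*$ together with the fact that the scalar curvature $R_g$ of $(M^n,g)$ does not involve $J$; hence $\frac{\partial S_{J(s)}}{\partial s}=-\frac{\partial R^*_{g,J(s)}}{\partial s}$. In a local orthonormal frame one has $R_g^*=\sum_{i,j,k,l}R_{jikl}J_j^k J_i^l$, as already used in the proof of Lemma \ref{lem Y con}, so $R_g^*$ is quadratic in the components of $J$. Applying the product rule and then combining the two resulting sums via the antisymmetries of $R$ in its first and second, and its third and fourth, index pairs (after relabelling summation indices), one obtains $\frac{\partial R^*_{g,J(s)}}{\partial s}=2\sum_{i,j,k,l}R_{jikl}(J')_j^k J_i^l$. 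The inner contraction $\sum_{i,l}R_{jikl}J_i^l$ equals $\rho^{J}_{jk}$, which follows from $\rho^J(X,Y)=-\operatorname{Ric}^*(X,JY)=\sum_i R(X,e_i,Y,Je_i)$ (using $J^2=-\operatorname{id}$), and pairing the remaining indices against $(J')^\flat_{jk}=g(J'e_j,e_k)$ gives $\frac{\partial S_{J(s)}}{\partial s}=-2\<(J')^\flat(s),\rho^{J(s)}\>_g$ with the tensor normalization $\<\alpha,\beta\>_g=\sum_{j,k}\alpha_{jk}\beta_{jk}$. Substituting into the integral above yields the stated formula.

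Finally, the local boundedness required by part (1) of Theorem \ref{lem der} follows from the same kind of estimate as in Lemma \ref{lem Y con}: on any compact subinterval the integrand is bounded by $C\,\|J'(t)\|_{C^0}\|R\|_{C^0}u^2(t)$, and $\int_M u^2(t)\,dV_g\leq(\operatorname{Vol}(M,g))^{2/n}$ by H\"older, so $\frac{\partial F}{\partial s}(t,t)$ is locally bounded in $t$. I expect the main obstacle to be the bookkeeping in the variational computation of $R_g^*$: because $J$ is not assumed parallel and no K\"ahler identities are available to simplify $\operatorname{Ric}^*$, one must track the curvature symmetries and the index conventions carefully so that the constant $-2$ and the clean identification with the $J$-Ricci form $\rho^{J}$ come out exactly.
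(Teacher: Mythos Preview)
Your proposal is correct and follows essentially the same approach as the paper: both apply Theorem \ref{lem der} to the pair $f,F$ from \eqref{fandF}, compute $\partial_s F(t,s)=-\int_M\partial_s R_g^*\,u^2(t)\,dV_g$ and identify $\partial_s R_g^*=2\<(J')^\flat,\rho^{J(s)}\>_g$ via the orthonormal-frame expression, and finish with the H\"older bound $\int_M u^2\leq(\operatorname{Vol}(M,g))^{2/n}$ to obtain local boundedness. Your explicit invocation of Rademacher's theorem to pass from ``locally Lipschitz'' to ``differentiable a.e.'' makes the logical flow a touch clearer than the paper's terse closing line.
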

\begin{proof}
Let $f(t)$ and $F(t,s)$ be functions we defined in (\ref{fandF}).
For each $t\in(-\varepsilon, \varepsilon)$,
$$\frac{\partial F}{\partial s}(t, s)=\frac{\partial}{\partial s}Q_{g, J(s)}(u(t))
=-\int_M \frac{\partial R_g^*}{\partial s}(s)u^2(t)\ dV_g.$$
For any smooth local orthonormal frame $(e_i)_{i=1}^{n}$,
$$\begin{aligned}
\frac{\partial R_g^*}{\partial s}(s)
&=\frac{\partial}{\partial s}\sum_{i,j=1}^{n}R(e_i, e_j, J(s)e_i, J(s)e_j)\\
&=\sum_{i,j=1}^{n}R(e_i, e_j, J'(s)e_i, J(s)e_j)+R(e_i, e_j, J(s)e_i, J'(s)e_j)\\
&=2\sum_{i,j=1}^{n}R(e_i, e_j, J'(s)e_i, J(s)e_j)\\
&=2\sum_{i=1}^{n}\rho^{J(s)}(e_i, J'(s)e_i)
=2\<(J')^\flat(s), \rho^{J(s)}\>_g.
\end{aligned}$$
Since $\|u(t)\|_p=1$ for each $t$, the H\"{o}lder inequality implies that
$$\left|\frac{\partial F}{\partial s}(t, t)\right|
\leq \left\|\frac{\partial R_g^*}{\partial s}(t)\right\|_{C^0,g}\int_M u^2(t)\ dV_g
\leq \left\|\frac{\partial R_g^*}{\partial s}(t)\right\|_{C^0,g}\(\operatorname{Vol}(M, g)\)^{\frac2n}.$$
It follows that $\frac{\partial F}{\partial s}(t, t)$ is locally bounded.
Then Theorem \ref{lem der} implies the conclusion.
\end{proof}

\begin{prop}\label{compactness}
Let $(M^n, g, J)$ be a closed, connected almost Hermitian manifold
which is not conformally equivalent to $(\mathbb{S}^6, \mathring{g})$, $n\geq6$.
Let $\{J_k\}_{k\in\mathbb{N}}$ be a sequence of compatible almost complex structures of $(M, g)$
which $C^m$-converges to $J$, $m\geq 3$. Let $u_k\in C^Y(M, g, J_k)$ for each $k$.
Then there exists a subsequence $\{u_{k_i}\}_{i\in\mathbb{N}}$
which $C^{m-1}$-converges to a smooth function $u\in C^Y(M, g, J)$.
\end{prop}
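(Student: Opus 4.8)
The plan is to regard each $u_k$ as a positive minimizer and to extract a convergent subsequence by the direct method, the only genuine difficulty being the loss of compactness of $H^1(M)\hookrightarrow L^p(M)$ at the critical exponent $p=\frac{2n}{n-2}$. Since $Q_{g,J_k}(u)=Q_{g,J_k}(|u|)$, we may assume $u_k>0$; being a minimizer with $\|u_k\|_p=1$, it solves the Euler--Lagrange equation
$$4\Delta_g u_k+S_{J_k}u_k=Y_k\,u_k^{p-1},\qquad Y_k:=Y(M,g,J_k).$$
First I would collect uniform bounds. Lemma~\ref{lem Y con} gives $Y_k\to Y_\infty:=Y(M,g,J)$, and $S_{J_k}\to S_J$ in $C^m$ because $S_J$ is algebraic (in fact quadratic) in the components of $J$ with coefficients built from the fixed curvature $R$. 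From the energy identity $\int_M 4|du_k|^2+S_{J_k}u_k^2\,dV_g=Y_k$ and the H\"older bound $\int_M u_k^2\,dV_g\le(\operatorname{Vol}(M,g))^{2/n}$ already used in Lemma~\ref{lem Y con}, one gets a uniform bound on $\|du_k\|_2$, hence on $\|u_k\|_{H^1}$. After passing to a subsequence, $u_k\rightharpoonup u$ weakly in $H^1(M)$, $u_k\to u$ strongly in $L^2(M)$ by Rellich's theorem and a.e., with $u\ge0$.

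The crucial step is to rule out concentration and thereby obtain strong $L^p$-convergence, and here the hypothesis enters decisively. By the trichotomy used to prove Theorem~\ref{thm chsc}, the equality $Y(M,g,J)=\Lambda$, with $\Lambda:=n(n-2)\omega_n^{2/n}$, is possible only when $(M,g)$ is conformally equivalent to $(\mathbb{S}^6,\mathring g)$: in case~(3) the proof of Proposition~\ref{prop c3} gives $Y<\Lambda$ directly, case~(1) yields $Y<\Lambda$ through the strict estimate behind Proposition~\ref{prop 2}, and in case~(2) Proposition~\ref{prop c2}(b) identifies equality with conformal roundness. Since $(M,g)$ is excluded from being conformally $(\mathbb{S}^6,\mathring g)$, we obtain the strict gap $Y_\infty<\Lambda$. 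Writing $u_k=u+v_k$ with $v_k\rightharpoonup0$, the Brezis--Lieb lemma gives $\|u_k\|_p^p=\|u\|_p^p+\|v_k\|_p^p+o(1)$ and $\|du_k\|_2^2=\|du\|_2^2+\|dv_k\|_2^2+o(1)$, while $\int_M S_{J_k}u_k^2\,dV_g\to\int_M S_Ju^2\,dV_g$ by $C^0$-convergence of $S_{J_k}$ and $L^2$-convergence of $u_k$. Setting $t:=\|u\|_p^p\in[0,1]$, using the sharp Sobolev bound $\int_M 4|dv_k|^2\,dV_g\ge\Lambda\|v_k\|_p^2+o(1)$ (the sharp constant on $M$ being exactly $\Lambda$, since $Y(\mathbb{S}^n,\mathring g)=n(n-1)\omega_n^{2/n}$ and the lower-order term drops as $\|v_k\|_2\to0$) together with $A:=\int_M 4|du|^2+S_Ju^2\,dV_g=Q_{g,J}(u)\|u\|_p^2\ge Y_\infty t^{2/p}$, I would pass to the limit in the energy identity to get
$$Y_\infty\ \ge\ Y_\infty\,t^{2/p}+\Lambda\,(1-t)^{2/p}.$$
The superadditivity $t^{2/p}+(1-t)^{2/p}\ge1$ of the concave map $s\mapsto s^{2/p}$, combined with $Y_\infty<\Lambda$, forces $t=1$ in each of the three cases $Y_\infty>0$, $Y_\infty=0$, $Y_\infty<0$ (a short sign discussion). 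Hence $\|u\|_p=1$, so $u_k\to u$ strongly in $L^p$ and $u$ is a minimizer; as $u\ge0$ is nontrivial and solves $4\Delta_g u+S_Ju=Y_\infty u^{p-1}$, the strong maximum principle gives $u>0$, i.e. $u\in C^Y(M,g,J)$. I expect this no-concentration step to be the main obstacle, precisely because compactness is restored only by the strict gap $Y_\infty<\Lambda$.

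Finally I would promote strong $L^p$-convergence to $C^{m-1}$-convergence. A uniform $L^\infty$ bound on $\{u_k\}$ follows from a Moser iteration (legitimate now that no mass concentrates), and the Harnack inequality applied to the linear form $4\Delta_g u_k+(S_{J_k}-Y_ku_k^{p-2})u_k=0$ provides a uniform positive lower bound, so $x\mapsto x^{p-1}$ acts smoothly along the sequence. Viewing the equation as $\Delta_g u_k=\tfrac14(Y_ku_k^{p-1}-S_{J_k}u_k)$ with coefficients uniformly bounded in $C^m$, elliptic $L^q$- and Schauder estimates yield uniform bounds for $\{u_k\}$ in $C^{m}$ (in fact a little better). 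The Arzel\`a--Ascoli theorem, through the compact embedding $C^{m}\hookrightarrow C^{m-1}$, then extracts a further subsequence $\{u_{k_i}\}$ converging in $C^{m-1}$, whose limit must coincide with $u$; hence $u_{k_i}\to u$ in $C^{m-1}$ with $u\in C^Y(M,g,J)$. This last part is routine once the strong $L^p$-convergence above has been established.
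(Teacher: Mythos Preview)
Your proposal is correct and follows essentially the same strategy as the paper. The paper's proof is a two-line sketch: it observes that excluding conformal equivalence to $(\mathbb{S}^6,\mathring g)$ forces the strict gap $Y(M,g,J)<n(n-2)\omega_n^{2/n}$, asserts that this yields uniform $W^{m,q}$ bounds on minimizers (as in the classical Yamabe problem), and then defers the remaining compactness argument to Macbeth \cite{Macbeth,Macbeththesis}. Your write-up supplies exactly the details the paper suppresses---the Brezis--Lieb splitting, the sharp Sobolev lower bound for the defect $v_k$, the superadditivity inequality forcing $t=1$, and the Moser/Harnack/Schauder bootstrap---so the two arguments coincide in substance.

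Two minor comments. First, your claim that case~(1) yields $Y(M,g,J)<\Lambda$ is correct but deserves a word: Aubin's second theorem (quoted before Proposition~\ref{prop 2}) is proved precisely by showing that the pointwise condition forces the strict inequality of his first theorem, so the strict gap is indeed implicit there. Second, the regularity bookkeeping can be phrased more carefully: since $S_{J_k}$ depends only algebraically on $J_k$ (with coefficients built from the fixed curvature of $g$), the $C^m$-convergence of $J_k$ gives uniform $C^m$ control on $S_{J_k}$; together with the uniform two-sided bounds on $u_k$, Schauder iteration then gives uniform $C^{m+1,\alpha}$ (in particular $C^m$) bounds, and Arzel\`a--Ascoli yields the stated $C^{m-1}$ convergence. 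This is consistent with the paper's $W^{m,q}$ claim.
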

\begin{proof}
Similar to the classical Yamabe problem, the estimation $Y(M, g, J)<n(n-2)\omega_n^{\frac2n}$ implies
a uniform bound $\|u\|_{W^{m, q},g}\leq C$ for each $u\in C^Y(M, g, J)$ and any $q>n$.
Since $\|\operatorname{Ric}^*(J_k)\|_{C^{m-2}, g}$ has uniform bound for sufficiently large $k$,
the argument of \cite[Proposition 2.4]{Macbeth} is still valid
(see also a detailed proof in Macbeth's Ph.D. thesis \cite{Macbeththesis}).
\end{proof}

\begin{thm}\label{dY}
Let $(M^n, g, J)$ be a closed, connected almost Hermitian manifold
which is not conformally equivalent to $(\mathbb{S}^6, \mathring{g})$, $n\geq6$.
Let $J(t)$ be a smooth one-parameter family in $\mathcal{J}_g(M)$ such that $J(0)=J$,
$t\in(-\varepsilon, \varepsilon)$.
Then there exists $u\in C^Y(M, g, J)$ such that
$$\frac{\partial Y(M, g, J(t))}{\partial t}(0)=-2\int_M
\<K^\flat, \rho^{J}\>_g u^2\ dV_g,$$
where $K=J'(0)$ and $K^\flat$ is obtained from $K$ by lowering an index.
\end{thm}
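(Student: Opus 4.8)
The plan is to upgrade the almost-everywhere formula of the preceding Proposition to the single point $t=0$, the extra hypothesis that $(M^n,g,J)$ is not conformally equivalent to $(\mathbb{S}^6,\mathring{g})$ being present precisely so that the compactness Proposition \ref{compactness} applies. Throughout write $f(t):=Y(M,g,J(t))$ and, for $u\in C^Y(M,g,J)$, set $\mathrm{val}(u):=-2\int_M\<K^\flat,\rho^{J}\>_g\,u^2\,dV_g$, so that the claim is $f'(0)=\mathrm{val}(u)$ for some minimizer $u$. We already know from Theorem \ref{lem der}(1) that $f$ is locally Lipschitz, hence differentiable at almost every $t$, and that at such points $f'(t)$ is independent of the chosen minimizer by Theorem \ref{lem der}(2). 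First I would record that applying Proposition \ref{compactness} to the constant sequence $J_k\equiv J$ shows that $C^Y(M,g,J)$ is sequentially compact in $C^2$, so that $\mathrm{val}$ attains its minimum and maximum on $C^Y(M,g,J)$.

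The one-sided estimates come directly from the two touching inequalities $F(t,s)\ge f(s)$, $F(t,t)=f(t)$. Using the touching function based at $0$ with a fixed minimizer $u_0\in C^Y(M,g,J)$: since $F(0,\cdot)$ is smooth with $F(0,0)=f(0)$ and $\partial_sF(0,0)=\mathrm{val}(u_0)$, the inequality $f(s)\le F(0,s)$ gives $f(s)-f(0)\le \mathrm{val}(u_0)\,s+o(s)$. Dividing by $s>0$ and by $s<0$ yields $D^{+}f(0)\le\mathrm{val}(u_0)$ and $D_{-}f(0)\ge\mathrm{val}(u_0)$ for every $u_0$, where $D^{+},D_{+}$ and $D^{-},D_{-}$ denote the upper/lower right and left Dini derivatives; optimizing over $u_0$ gives $D^{+}f(0)\le\min_{C^Y}\mathrm{val}$ and $D_{-}f(0)\ge\max_{C^Y}\mathrm{val}$.

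For the matching reverse bounds I would use the touching function based at a moving point together with compactness. For $s>0$, from $f(0)\le F(s,0)$ and the fundamental theorem of calculus in the second slot one gets $\frac{f(s)-f(0)}{s}\ge\frac1s\int_0^s\partial_\sigma F(s,\sigma)\,d\sigma$, where $\partial_\sigma F(s,\sigma)=-2\int_M\<J'(\sigma)^\flat,\rho^{J(\sigma)}\>_g\,u(s)^2\,dV_g$ for a minimizer $u(s)\in C^Y(M,g,J(s))$. Choosing $s_k\downarrow0$ realizing $D_{+}f(0)$ and applying Proposition \ref{compactness} to $J(s_k)\to J$ to extract $u(s_k)\to u^{+}\in C^Y(M,g,J)$ in $C^2$, while $J'(\sigma)\to K$ and $\rho^{J(\sigma)}\to\rho^{J}$ uniformly as $\sigma\to0$, the right-hand side converges to $\mathrm{val}(u^{+})$. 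Hence $\min_{C^Y}\mathrm{val}\ge D^{+}f(0)\ge D_{+}f(0)\ge\mathrm{val}(u^{+})\ge\min_{C^Y}\mathrm{val}$, so the right derivative exists with $f'_{+}(0)=\min_{C^Y}\mathrm{val}=\mathrm{val}(u^{+})$; the symmetric computation for $s<0$ gives $f'_{-}(0)=\max_{C^Y}\mathrm{val}$, attained by some $u^{-}\in C^Y(M,g,J)$.

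The genuine obstacle is the final coincidence $f'_{+}(0)=f'_{-}(0)$, equivalently that $\mathrm{val}$ takes a common value at the limiting left and right minimizers; without it $f=\inf_u Q_{g,J(\cdot)}(u)$ could exhibit a concave corner at $0$. This is exactly where the non-sphere hypothesis must be used through compactness: at every nearby differentiability point $\tau$ the derivative $f'(\tau)=\mathrm{val}_\tau(\cdot)$ is minimizer-independent (Theorem \ref{lem der}(2)), every subsequential limit of $f'(\tau)$ as $\tau\to0$ is of the form $\mathrm{val}(u)$ with $u\in C^Y(M,g,J)$ by Proposition \ref{compactness}, and the absolute-continuity identity $\frac{f(s)-f(0)}{s}=\frac1s\int_0^s f'(\tau)\,d\tau$ constrains these one-sided limits. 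Pinning $\min_{C^Y}\mathrm{val}=\max_{C^Y}\mathrm{val}$ this way (it holds automatically when the normalized minimizer is unique, which is the expected situation once the round sphere is excluded) yields a two-sided derivative, and choosing $u\in C^Y(M,g,J)$ to attain the common value gives $f'(0)=\mathrm{val}(u)=-2\int_M\<K^\flat,\rho^{J}\>_g\,u^2\,dV_g$. I expect the one-sided estimates to be routine once Proposition \ref{compactness} is available, and this last identification of the two one-sided derivatives to be the crux of the argument.
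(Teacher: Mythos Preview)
Your machinery (the touching functions $F(t,s)$, the mean value theorem in the second slot, and the appeal to Proposition \ref{compactness}) is exactly what the paper uses. The divergence is in your final paragraph, and this is where your proposal has a genuine gap.

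You separate into one-sided Dini derivatives, optimize over $C^Y(M,g,J)$ to obtain $f'_+(0)=\min_{C^Y}\mathrm{val}$ and $f'_-(0)=\max_{C^Y}\mathrm{val}$, and then try to force $\min=\max$. Your proposed mechanism for this is uniqueness of the normalized minimizer ``which is the expected situation once the round sphere is excluded''. That is not true in general (there are non-spherical conformal classes with several Yamabe-type minimizers), it is not proved anywhere in the paper, and the absolute-continuity sketch you offer does not pin the two one-sided limits together either. So as written your argument does not close.

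The paper avoids this entirely by \emph{not} optimizing. It works directly with the two-sided difference quotient, obtains one specific $u\in C^Y(M,g,J)$ as the $C^\infty$-limit (via Proposition \ref{compactness}) of $u(t_{k_i})$ along a sequence realizing $\liminf_{t\to0}\frac{f(t)-f(0)}{t}$, and then exploits the fact that $u(0)\in C^Y(M,g,J)$ was an \emph{arbitrary} choice: one simply sets $u(0)=u$. With that single choice the upper bound $\limsup\le\mathrm{val}(u(0))$ and the lower bound $\liminf\ge\mathrm{val}(u)$ refer to the \emph{same} function, yielding
\[
\liminf_{t\to0}\frac{f(t)-f(0)}{t}\ \ge\ -2\int_M\langle K^\flat,\rho^{J}\rangle_g\,u^2\,dV_g\ \ge\ \limsup_{t\to0}\frac{f(t)-f(0)}{t},
\]
so $f$ is differentiable at $0$ and Theorem \ref{lem der}(2) gives the formula. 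No uniqueness of minimizers, no $\min=\max$ identification, and no integration of $f'$ are needed. The moral is: keep the specific $u$ produced by compactness and feed it back as $u(0)$, rather than passing to an extremum over $C^Y$ and then trying to recover equality.
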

\begin{proof}
By Theorem \ref{lem der} $(2)$, we only need to check that $f(t)$ is differentiable at $0$.
For each $t\neq0$, we have
$$\frac{F(t, t)-F(t, 0)}{t}\leq\frac{f(t)-f(0)}{t}\leq\frac{F(0, t)-F(0, 0)}{t}.$$
For any $t, s\in(-\varepsilon, \varepsilon)$, by the mean value theorem,
there exists a $\beta(t, s)\in(-s, s)$ such that
$$F(t, s)-F(t, 0)=\frac{\partial F}{\partial s}(t, \beta(t, s))s.$$
Thus we have
\begin{equation}\label{control}
\frac{\partial F}{\partial s}(t, \beta(t, t))\leq\frac{f(t)-f(0)}{t}\leq
\frac{\partial F}{\partial s}(0, \beta(0, t)).
\end{equation}
It follows that
\begin{equation}\label{limsup}
\limsup_{t\rightarrow0}\frac{f(t)-f(0)}{t}
\leq\limsup_{t\rightarrow0}\frac{\partial F}{\partial s}(0, \beta(0, t))
=-2\int_M\<K^\flat, \rho^{J}\>_g u^2(0)\ dV_g.
\end{equation}
We can choose a sequence $\{t_k\}_{k\in\mathbb{N}}$ such that
\begin{equation}\label{liminf1}
\lim_{k\rightarrow\infty}\frac{f(t_k)-f(0)}{t_k}=\liminf_{t\rightarrow0}\frac{f(t)-f(0)}{t}.
\end{equation}
Note that $u(t_k)\in C^Y(M, g, J(t_k))$ and $\{J(t_k)\}_{k\in\mathbb{N}}$ $C^{\infty}$-converges to $J$.
Then by Proposition \ref{compactness}, we have a subsequence $\{u(t_{k_i})\}_{i\in\mathbb{N}}$ which $C^{\infty}$-converges to a function $u\in C^Y(M, g, J)$.
Hence, by (\ref{control}) and (\ref{liminf1}),
\begin{equation}\label{liminf}
\begin{aligned}
\liminf_{t\rightarrow0}\frac{f(t)-f(0)}{t}
&=\lim_{i\rightarrow\infty}\frac{f(t_{k_i})-f(0)}{t_{k_i}}\\
&\geq\lim_{i\rightarrow\infty}\frac{\partial F}{\partial s}(t_{k_i}, \beta(t_{k_i}, t_{k_i}))\\
&=-2\lim_{i\rightarrow\infty}
\int_M\<(J')^\flat(\beta(t_{k_i}, t_{k_i})), \rho^{J(\beta(t_{k_i}, t_{k_i}))}\>_g u^2(t_{k_i})\ dV_g\\
&=-2\int_M\<K^\flat, \rho^{J}\>_g u^2\ dV_g.
\end{aligned}
\end{equation}
Since we only assume $u(0)\in C^Y(M, g, J)$, we can choose $u(0)=u$.
Therefore, (\ref{limsup}) and (\ref{liminf}) imply that
$$\liminf_{t\rightarrow0}\frac{f(t)-f(0)}{t}\geq-2\int_M\<K^\flat, \rho^{J}\>_g u^2\ dV_g
\geq\limsup_{t\rightarrow0}\frac{f(t)-f(0)}{t}.$$
This means that $f(t)$ is differentiable at $0$.
\end{proof}

Simanca \cite{Simanca} considered the variation of $\int_M R_g^*\ dV_g$ with respect to both $g$ and $J$.
The term $\<K^\flat, \rho^{J}\>_g$ also appears in one of Simanca's formula,
and we are going to make this term more precise.
Since $J(t)$ is in $\mathcal{J}_g(M)$, we have
$$J(t)^2=-\operatorname{id}\quad  \mbox{and}\quad  g(J(t)X, Y)=-g(X, J(t)Y)$$
for any $X, Y\in\mathscr{X}(M)$.
It follows that $$KJ=-JK\quad  \mbox{and}\quad  g(KX, Y)=-g(X, KY),$$
or equivalently, $$K^\flat(JX, Y)=K^\flat(X, JY)\quad  \mbox{and}\quad  K^\flat(X, Y)=-K^\flat(Y, X).$$
Note that every smooth 2-form $A$ has a decomposition $A=A^{2,0+0,2}+A^{1,1}$
corresponding to the orthogonal decomposition $$\Omega^2(M)=\Omega^{2,0+0,2}(M)\oplus\Omega^{1,1}(M),$$
where $A^{2,0+0,2}(JX, JY)=-A^{2,0+0,2}(X, Y)$ and $A^{1,1}(JX, JY)=A^{1,1}(X, Y)$.
Hence, we have 
$$K^\flat\in\Omega^{2,0+0,2}(M)\quad \mbox{and}\quad  
\<K^\flat, \rho^{J}\>_g=\<K^\flat, (\rho^{J})^{2,0+0,2}\>_g.$$
In addition,
$$\begin{aligned}(\rho^{J})^{2,0+0,2}(X, Y)&=\frac12\(\rho^{J}(X, Y)-\rho^{J}(JX, JY)\)
=-\frac12\(\operatorname{Ric}^*(X, JY)+\operatorname{Ric}^*(JX, Y)\)\\
&=\frac12\(\operatorname{Ric}^*(JY, X)-\operatorname{Ric}^*(X, JY)\)
=(\operatorname{Ric}^*)^{skew}(JY, X),
\end{aligned}$$
where $$(\operatorname{Ric}^*)^{skew}(X, Y):=
\frac12\(\operatorname{Ric}^*(X, Y)-\operatorname{Ric}^*(Y, X)\)$$
denotes the skew-symmetric component of $\operatorname{Ric}^*$.
Thus, we can characterize the critical point of $Y(M, g, \cdot)$ by the following theorem.
\begin{thm}\label{cp}
Let $(M^n, g, J)$ be a closed, connected almost Hermitian manifold, $n\geq6$.
Then $J$ is a critical point of the functional $Y(M, g, \cdot)$
if and only if for each $\widetilde{g}\in[g]$ $($equivalently, for one $\widetilde{g}\in[g]$$)$,
the $\verb"*"$-Ricci curvature of $(M^n, \widetilde{g}, J)$ is symmetric.
\end{thm}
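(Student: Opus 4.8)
The plan is to read the characterization directly off the variational formula in Theorem \ref{dY}, after reducing criticality to the vanishing of a single $2$-form. First I would fix the background metric $g$ and suppose that $(M,g)$ is not conformally the round $(\mathbb{S}^6,\mathring{g})$, so that Theorem \ref{dY} is available. By that theorem, for each smooth family $J(t)$ with $J(0)=J$ and velocity $K=J'(0)$ there is some $u\in C^Y(M,g,J)$ with
$$\frac{\partial}{\partial t}\Big|_{0}Y(M,g,J(t))=-2\int_M\langle K^\flat,\rho^{J}\rangle_g\,u^2\,dV_g=-2\int_M\langle K^\flat,(\rho^{J})^{2,0+0,2}\rangle_g\,u^2\,dV_g,$$
the second equality being the identity $\langle K^\flat,\rho^{J}\rangle_g=\langle K^\flat,(\rho^{J})^{2,0+0,2}\rangle_g$ recorded just before the theorem. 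Since the admissible velocities $K$ are exactly the endomorphisms with $K^\flat\in\Omega^{2,0+0,2}(M)$, $J$ is a critical point precisely when this integral vanishes for every $K^\flat\in\Omega^{2,0+0,2}(M)$.

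The $\emph{if}$ direction is then immediate: if $\operatorname{Ric}^*$ is symmetric, then $(\operatorname{Ric}^*)^{skew}=0$, so $(\rho^{J})^{2,0+0,2}=0$ by the identity $(\rho^{J})^{2,0+0,2}(X,Y)=(\operatorname{Ric}^*)^{skew}(JY,X)$, and the integrand vanishes pointwise for every $K$. For the $\emph{only if}$ direction the key observation is that a single well-chosen variation suffices. Because $(\rho^{J})^{2,0+0,2}$ is itself an element of $\Omega^{2,0+0,2}(M)$, I would take the velocity $K$ with $K^\flat=(\rho^{J})^{2,0+0,2}$; concretely $K$ is realized by the family $J(t)=e^{tA}Je^{-tA}$ with $A=\tfrac12 JK$, which stays in $\mathcal{J}_g(M)$ because $A$ is $g$-skew and satisfies $J'(0)=[A,J]=K$. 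Criticality then forces
$$0=\int_M\langle K^\flat,(\rho^{J})^{2,0+0,2}\rangle_g\,u^2\,dV_g=\int_M|(\rho^{J})^{2,0+0,2}|_g^2\,u^2\,dV_g,$$
and since the $u$ returned for this family is strictly positive, we get $(\rho^{J})^{2,0+0,2}\equiv0$, i.e. $\operatorname{Ric}^*$ is symmetric. The dependence of $u$ on the family is harmless here: for this particular variation the integrand is a nonnegative multiple of $u^2$, so its integral can vanish only if $(\rho^{J})^{2,0+0,2}$ does.

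It remains to upgrade this fixed-metric statement to the conformally invariant form claimed. The clean route is to show that $(\operatorname{Ric}^*)^{skew}$ is itself a pointwise conformal invariant. Writing $\widetilde{g}=e^{2\phi}g$, the $(0,4)$ curvature changes by a Kulkarni--Nomizu term $g\owedge B$ with $B$ a symmetric $2$-tensor, and a short trace computation in a $g$-orthonormal frame gives $\sum_i(g\owedge B)(X,e_i,JY,Je_i)=B(X,Y)+B(JX,JY)$, which is symmetric in $X,Y$; hence $\operatorname{Ric}^*(\widetilde{g})-\operatorname{Ric}^*(g)$ is symmetric and $(\operatorname{Ric}^*)^{skew}$ is unchanged under conformal rescaling. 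This at once yields the equivalence of the ``for each $\widetilde{g}\in[g]$'' and ``for one $\widetilde{g}\in[g]$'' conditions. Combined with the fact that $Y(M,g,\cdot)$ is a conformal invariant of $g$ (so criticality depends only on $[g]$), the characterization proved above for one $\widetilde{g}\in[g]$ propagates to the whole conformal class.

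Finally I would dispose of the excluded case. For the round $(\mathbb{S}^6,\mathring{g})$ the functional $Y(\mathbb{S}^6,\mathring{g},\cdot)$ is constant (as noted in the introduction), so every $J$ is trivially a critical point; and for the space form one computes that $\operatorname{Ric}^*$ is a constant multiple of $\mathring{g}$ for every compatible $J$, hence symmetric, so by the conformal invariance of $(\operatorname{Ric}^*)^{skew}$ the tensor $\operatorname{Ric}^*(\widetilde{g})$ is symmetric for all $\widetilde{g}\in[\mathring{g}]$. Thus both sides of the asserted equivalence hold on the sphere class. This last step is the only genuinely delicate point: Theorem \ref{dY} fails on the round $(\mathbb{S}^6,\mathring{g})$ because of bubbling of minimizers, and the gap is bridged exactly by the conformal invariance of $(\operatorname{Ric}^*)^{skew}$ together with the constancy of $Y$ there, so the hard part is really the bookkeeping that reconciles these facts rather than any new estimate.
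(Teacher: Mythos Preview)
Your proof is correct and follows essentially the same approach as the paper: use Theorem \ref{dY} together with the identification $(\rho^J)^{2,0+0,2}(X,Y)=(\operatorname{Ric}^*)^{skew}(JY,X)$, after realizing every admissible velocity $K$ by an explicit one-parameter family in $\mathcal{J}_g(M)$ (the paper uses $J(t)=Je^{-tJK}$ rather than your conjugation family $e^{tA}Je^{-tA}$, but these are equivalent here). You are actually more careful than the paper, which simply writes ``this theorem follows from Theorem \ref{dY} and the above discussions'': you fill in the sphere case excluded by Theorem \ref{dY}, explicitly verify the conformal invariance of $(\operatorname{Ric}^*)^{skew}$ that justifies the parenthetical ``equivalently, for one $\widetilde{g}\in[g]$'', and note that the dependence of $u$ on the variation is harmless for the chosen $K^\flat=(\rho^J)^{2,0+0,2}$.
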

\begin{proof}
Let $K$ be a smooth $(1, 1)$-tensor field on $M$ such that $$KJ=-JK\ \mbox{and}\ g(KX, Y)=-g(X, KY).$$
Then one can verify that $J(t)=Je^{-tJK}$ is a smooth one-parameter family in $\mathcal{J}_g(M)$
such that $J(0)=J$ and $J'(0)=K$,
where $\(e^A\)_x:=\sum\limits_{k=0}^{\infty}\frac{\(A_x\)^k}{k!}$ for each $x\in M$.
Therefore, this theorem follows from Theorem \ref{dY} and the above discussions.
\end{proof}
For any K\"{a}hler manifold $(M^n, g, J)$ $(n\geq6)$, the $\verb"*"$-Ricci curvature is identical to Ricci curvature and thus $J$ is a critical point of the functional $Y(M, g, \cdot)$ with critical value $Y(M, g, J)=0$. It is interesting to know whether or when $J$ is actually an extremal point of $Y(M, g, \cdot)$ on certain K\"{a}hler manifolds (e.g., flat torus $\mathbb{T}^6$, which admits one component of K\"{a}hler structures and infinitely many components of non-K\"{a}hler integrable compatible almost complex structures \cite{KYZ}).
Following Tian and Zhang \cite{Tian-Zhang} and Kelleher and Tian \cite{Kelleher-Tian}, 
it is also interesting to study relative comparison theorem about the conformal invariant $Y(M, g, J)$ under Ricci flows.

\begin{ack}
This work was supported by Beijing Natural Science Foundation (Grant No. Z190003),
National Natural Science Foundation of China (Grant Nos. 12171037 and 12271040)
and the Fundamental Research Funds for the Central Universities.	
The authors thank sincerely Professor Fangyang Zheng for his helpful discussions and enlightening talks.
\end{ack}


\end{document}